\documentclass[a4paper,11pt]{article}
\usepackage[sc]{mathpazo}
\linespread{1.04}         
\usepackage[T1]{fontenc}
\usepackage{youngtab}
\usepackage{amsthm,amsmath, amsfonts,hyperref,a4wide,amssymb}
\usepackage[usenames,dvipsnames]{xcolor} 
\usepackage{tikz}
\makeatletter
\tikzset{my loop/.style =  {to path={
  \pgfextra{}
  [looseness=12,min distance=6mm]
  \tikz@to@curve@path},font=\sffamily\small
  }}  
\makeatletter 
\usetikzlibrary{matrix,arrows,backgrounds,positioning,fit,shapes}

\newtheorem{theorem}{Theorem}
\newtheorem{lemma}{Lemma}

\newtheorem{proposition}[lemma]{Proposition}
\theoremstyle{definition}
\newtheorem{ex}{Example}
\newtheorem*{df}{Definition}
\theoremstyle{remark}
\newtheorem{rem}{Remark}

\newcommand*{\R}{\mathbb{R}}
\newcommand*{\Z}{\mathbb{Z}}

\newcommand*{\N}{\mathbb{N}}
\newcommand*{\C}{\mathbb{C}}

\newcommand*{\E}{\mathcal{E}}

\newcommand*{\oM}{\overrightarrow{\mathcal{M}}}

\newcommand*{\I}{\mathcal{I}}
\newcommand*{\F}{\mathcal{F}}
\newcommand*{\G}{\mathcal{G}}

\newcommand*{\D}{\mathcal{D}}


\newcommand*{\rk}{\mathrm{rk}}


\newcommand{\la}{\langle}
\newcommand{\ra}{\rangle}

\title{Graph parameters from symplectic group invariants}
\author{Guus Regts\thanks{University of Amsterdam, Sciencepark 105-107, 1098 XH Amsterdam, the Netherlands. Email: \texttt{guusregts@gmail.com.} Supported by a NWO Veni grant.}\and Bart Sevenster \thanks{University of Amsterdam. Email: \texttt{blsevenster@gmail.com}}}

\begin{document}
\maketitle
\begin{abstract}
\noindent In this paper we introduce, and characterize, a class of graph parameters obtained from tensor invariants of the symplectic group.
These parameters are similar to partition functions of vertex models, as introduced by de la Harpe and Jones, [P. de la Harpe, V.F.R. Jones, Graph invariants related to statistical mechanical models:
examples and problems, {\sl Journal of Combinatorial Theory}, Series B {\bf 57} (1993) 207--227]. 
Yet they give a completely different class of graph invariants.
We moreover show that certain evaluations of the cycle partition polynomial, as defined by Martin [P. Martin, {\sl Enumérations eulériennes dans les multigraphes et invariants de Tutte-Grothendieck}, Diss. Institut National Polytechnique de Grenoble-INPG; Université Joseph-Fourier-Grenoble I, 1977], give examples of graph parameters that can be obtained this way.

\noindent \textbf{Keywords.} partition function, graph parameter, symplectic group, cycle partition polynomial

\noindent \textbf{M.S.C [2010]}  Primary   05C45, 15A72; Secondary 05C25, 05C31
\end{abstract}

\section{Introduction}
Partition functions of statistical (spin and vertex) models as graph parameters were introduced by de la Harpe and Jones in \cite{HJ}.
Partition functions of spin models include the number of graph homomorphisms into a fixed graph, and they play an important role in the theory of graph limits, cf. \cite{L12}.
A standard example of the partition function of a vertex model is the number of matchings.
Szegedy \cite{Sz7,Sz10} showed that the partition function of any spin model can be realized as the partition function of a vertex model. 
Partition functions of vertex models occur in several different mathematical disciplines. 
For example as Lie algebra weight systems in the theory of Vassiliev knot invariants cf.~\cite{CDM12}, as tensor network contractions in quantum information theory \cite{MS08} and as Holant problems in theoretical computer science \cite{CLX11,CGW13,W15}.

In \cite{Sz7}, Szegedy calls a vertex model an edge-coloring model and we will adopt his terminology here.
Let $\N$ include $0$, and for $k\in \N$ we denote by $[k]$ the set $\{1,\ldots,k\}$. Let $V_k$ be a $k$-dimensional vector space over $\C$ and let $V_k^*$ denote its dual space, i.e. the space of linear functions $V_k\to \C$.
Following Szegedy \cite{Sz7}, for $k\in \N$ we will call $h=(h^n)$, with $h^n$ a symmetric tensor in $(V_k^*)^{\otimes n}$ for each $n\in \N$, a \emph{$k$-color edge-coloring model} (in \cite{HJ} it is called a vertex model). We will often omit the reference to $k$.
The \emph{partition function} of $h$ is the graph parameter $p_h$ defined for a graph $G=(V,E)$ by
\begin{equation}
p_h(G):=\sum_{\phi:E\to[k]}\prod_{v\in V} h^{\deg(v)}(\bigotimes_{a\in \delta(v)}e_{\phi(a)}),	\label{eq:pf}
\end{equation}
where for $v\in V$, $\delta(v)$ denotes the set of edges incident with $v$ and where $e_1,\ldots ,e_k$ is a basis for $V_k$. (Note that since $h^{\deg(v)}$ is symmetric the order is irrelevant.)

Starting with the work of Freedman, Lov\'asz and Schrijver \cite{FLS} and Szegedy \cite{Sz7} a line of research has emerged in which partition function of spin and edge-coloring models have been characterized for several types of combinatorial structures such as, graphs \cite{LS9,S9,DGLRS12}, directed graphs \cite{DGLRS12,S15b}, virtual link diagrams \cite{RSS15} and chord diagrams \cite{S15a}.
The characterizations of partition functions of edge-coloring models revealed an intimate connection between the invariant theory of the orthogonal and general linear group and these partition functions. 
However, the symplectic group never showed up.
In this paper we will introduce, and characterize, a class of graph parameters related to tensors invariants of the symplectic group that we call \emph{skew-partition functions of edge-coloring models}.

It turns out that these skew-partition functions are most naturally defined for directed graphs, but, surprisingly, we show that when restricted to skew-symmetric tensors, one can in fact define them for undirected Eulerian graphs.
These skew-partition functions are related to `negative dimensional' tensors; see \cite{P71} in which Penrose already describes a basic example.
For suitable choice of tensors, these skew-partition functions give rise to evaluations of the cycle-partition polynomial (a normalization of the Martin polynomial \cite{Martin}) at negative even integers. 
As such, these skew-partition functions play a similar role for the cycle partition polynomial as the number of homomorphisms into the complete graph for the chromatic polynomial.

Besides their connection to the symplectic group, the introduction of skew-partition functions is also motivated by a paper of Schrijver \cite{S15}.
In \cite{S15} Schrijver characterized partition functions of edge-coloring models in terms of rank growth of edge-connection matrices.
We need some definitions to state this result.
For $k\in \N$, a \emph{$k$-fragment} is a graph which has $k$ vertices of degree one labeled $1,2,\ldots,k$. 
We will refer to an edge incident with a labeled vertex as an \emph{open end}.
Let $\F_k$ denote the collection of all $k$-fragments, where we allow multiple edges and loops. 
Then $\F_0$ can be considered as the collection of all graphs $\G$.
Moreover, any component of the underlying graph of a fragment in $\F_k$ may consist of a single edge with an empty vertex set; we call this graph the \emph{circle} and denote it by $\bigcirc$.
Define a \emph{gluing operation} $*:\F_k\times \F_k\to \G$, where, for two $k$-fragments $F_1,F_2$, $F_1*F_2$ is the graph obtained from $F_1\cup F_2$ by removing the labeled vertices and gluing the edges incident to equally labeled vertices together.
Note that by gluing two open ends of which the endpoints are labeled one creates a circle.
For a graph parameter $f:\G\to \C$, define the $k$-th \emph{connection matrix} $M_{f,k}$ by
\[
M_{f,k}(F_1,F_2):=f(F_1*F_2)
\]
for $F_1,F_2\in \F_k$.
Schrijver \cite{S15} proved
\begin{equation}\label{eq:thm lex}
\begin{array}{l}
f:\G\to \C \text{ is the partition function of an edge-coloring model if and only if } 
\\
f(\emptyset)=1, f(\bigcirc)\in \R \text{ and }\text{rk}(M_{f,k})\leq f(\bigcirc)^k\text{ for all }k\in \N.
\end{array}
\end{equation}
Since the rank of a matrix can never be negative, the conditions in \eqref{eq:thm lex} imply that $f(\bigcirc)\geq 0$.
It is however natural to ask what happens when we keep the rank conditions only for even $k$ and add the condition that $f(\bigcirc)<0$. Squaring a parameter satisfying these conditions, we automatically obtain the partition function of an edge-coloring model, as follows from \eqref{eq:thm lex}.
There are graph parameters that satisfy these conditions. 
Consider for example 
\begin{equation}
f(G)=\left\{\begin{array}{l} (-2)^{c(G)} \text{ if } G \text{ is $2$-regular,}
							\\ 0 \text{ otherwise,} 
							\end{array}\right.
\end{equation}
where $c(G)$ denotes the number of components of the graph $G$.
Setting $f(\bigcirc)=-2$, one has $\rk(M_{f,2k})\leq 4^k$. (This follows for example from our main theorem, cf. Theorem \ref{thm:main}.)
We note that no matter what value we choose for $\bigcirc$, this $f$ is never the (ordinary) partition function of an edge-coloring model, as follows from the results in \cite{DGLRS12}, see also \cite[Proposition 5.6]{R13}.
It turns out that skew-partition functions are graph parameters satisfying the modified conditions in \eqref{eq:thm lex}, and moreover, these are the only graph parameters. 
This is the content of our main theorem.
\\

\noindent{\bf Organization} 
In the next section we introduce skew-partition functions, derive some basic properties and state our main theorem, Theorem \ref{thm:main}.
In Section \ref{sec:martin} we show that evaluations at negative even integers of the cycle partition polynomial can be realized as the skew-partition function of suitable edge-coloring models. 
Section \ref{sec:framework} deals with some framework, which we need to prove Theorem \ref{thm:main}. 
This framework is similar to the framework developed in \cite{DGLRS12}. 
In particular, the connection with the invariant theory of the symplectic group will become clear there.
In Section \ref{sec:proof} we prove Theorem \ref{thm:main} and in Section \ref{sec:remarks} we conclude with some further remarks and questions.

\section{Definitions and main result}
In this section we introduce skew-partition functions and state our main result. 
In what follows, all graphs are allowed to have loops and multiple edges. 
We call a graph Eulerian if each vertex has even degree. 
In particular, the circle $\bigcirc$ is an Eulerian graph.

\subsection{Definitions and main result}
Let $G = (V,E)$ be an Eulerian graph with an Eulerian orientation $\omega$ of $E$. 
 A \emph{local ordering} $\kappa = (\kappa^-,\kappa^+)$ \emph{compatible} with  $\omega$ is at each vertex $v$ a pair of bijections, $\kappa^-_v$, from $\delta^-(v)$ to $\{1,3,...,d(v)-1\}$, the incoming arcs at $v$, and $\kappa^+_v$, from $\delta^+(v)$ to $\{2,4,...,d(v)\}$, the outgoing arcs at $v$. 
A local ordering $\kappa$ that is compatible with $\omega$ decomposes $E$ uniquely into circuits of the form $(v_1,a_1,...,a_i,v_i,a_{i+1},...,v_1)$, where $\kappa^-_{v_i}(a_i) + 1 = \kappa^+_{v_i}(a_{i+1})$ for each $i$. (Recall that a circuit is a closed walk where vertices may occur multiple times, but edges may not). We will refer to a circuit in this decomposition as a \emph{$\kappa$-circuit}.
Let $c(G,\kappa)$ be the number of $\kappa$-circuits in this decomposition.

Let $\mathcal{G}^{\#}$ be the set of triples $(G,\omega,\kappa)$, where $G$ is an Eulerian graph, $\omega$ is an Eulerian orientation of $G$ and $\kappa$ is a local ordering compatible with $\omega$.

Let $V_{2 \ell}:=\C^{2\ell}$ with standard basis $e_1,\ldots,e_{2\ell}$. Let $f_i\in V_{2 \ell}$ for $i=1,\ldots,2\ell$ be defined by,
\begin{equation}\label{eq:dual}
f_i:=\left\{\begin{array}{rl} -e_{i+\ell} &\text{ if } i\leq \ell,
\\ e_{i-\ell}& \text{ if }i>\ell.
\end{array}\right.
\end{equation}
If $W$ is a vector space, then $\Lambda^n(W)$ denotes the space of skew-symmetric $n$-tensors on $W$; note that this space is zero if $n > \text{dim(W)}$. 
Let $\bigwedge(V_{2 \ell}^*):=\bigoplus_{n=0}^{2\ell}\bigwedge^n(V_{2 \ell}^*)$.
For $h \in \bigwedge(V_{2 \ell}^*)$ and $(G,\omega,\kappa)\in \G^\#$ we now define
\begin{equation}\label{eq:def functie}
s_h((G,\omega,\kappa)) = (-1)^{c(G,\kappa)} \sum_{\phi: E(G) \rightarrow [2 \ell]} \prod_{v \in V(G)} h(\bigotimes_{i=1,3,\ldots,d(v)-1} e_{\phi(\kappa^{-1}_v(i))} \otimes f_{\phi(\kappa^{-1}_v(i+1))}).
\end{equation}

\begin{proposition}\label{prop:inv keuze}
Let $G$ be an Eulerian graph.
Then the value of $s_h((G,\omega,\kappa))$ for $(G,\omega,\kappa)\in \G^\#$ is independent of the choice of Eulerian orientation $\omega$ and compatible local ordering $\kappa$. 
\end{proposition}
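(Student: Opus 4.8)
The plan is to show invariance in two independent steps: first that $s_h$ does not depend on the choice of compatible local ordering $\kappa$ once the Eulerian orientation $\omega$ is fixed, and then that it does not depend on $\omega$ either. For the first step, I would argue that any two local orderings compatible with a fixed $\omega$ differ by a sequence of "elementary moves" performed at a single vertex $v$: at $v$ one relabels the incoming arcs $\delta^-(v)$ by some permutation $\sigma$ of $\{1,3,\ldots,d(v)-1\}$ and the outgoing arcs by some permutation $\tau$ of $\{2,4,\ldots,d(v)\}$. Since the tensor $h$ fed into the product at $v$ is an element of $\bigwedge(V_{2\ell}^*)$, permuting the $d(v)$ slots by a permutation $\rho\in S_{d(v)}$ multiplies the value by $\mathrm{sgn}(\rho)$. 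So the summand for each fixed $\phi$ gets multiplied by $\mathrm{sgn}(\rho)$, where $\rho$ is the permutation on the $d(v)$ slots at $v$ induced by $(\sigma,\tau)$ together with the possible interleaving change. The key point is then a parity bookkeeping lemma: the change in $c(G,\kappa)$ under such a move has exactly the parity of $\rho$, so the global sign $(-1)^{c(G,\kappa)}$ compensates. Concretely, transposing two incoming arcs (or two outgoing arcs) at $v$ is a single transposition of slots, and one checks by tracing the circuit decomposition that it either merges two $\kappa$-circuits into one or splits one into two, changing $c(G,\kappa)$ by exactly $\pm 1$; reordering which incoming arc is paired with which outgoing arc decomposes similarly. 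Summing over all $\phi$, $s_h$ is unchanged.

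For the second step I would reduce, as is standard, to the case of reversing the orientation of the arcs along a single directed circuit $C$ of $\omega$ (any two Eulerian orientations of $G$ are related by such reversals, since their "difference" is an Eulerian subgraph which decomposes into circuits). After reversing $C$, I first need to produce a local ordering $\kappa'$ compatible with the new orientation $\omega'$; by the first step I am free to choose any convenient one. The crucial computational input is the choice of $f_i$ in \eqref{eq:dual}: the vectors $f_i$ are built from the standard symplectic form, so that reversing an arc swaps the roles of the "$e$" slot and the "$f$" slot at each of its two endpoints. Expanding $f_i = \pm e_{i\mp\ell}$ and using that $h$ is alternating, I would show that the contribution of reversing $C$ reorganizes the sum over $\phi$ by the substitution matching $e_j\leftrightarrow f_j$ along $C$, the signs from \eqref{eq:dual} combining over the arcs of $C$ into exactly $(-1)^{|C|}$ or similar, and that this is again absorbed by the change in $(-1)^{c(G,\kappa)}$ together with the freedom in choosing $\kappa'$. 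This is really where the symplectic group enters: the $f_i$ span the same configuration as applying $-J$ (the symplectic form), and $h$ alternating means the pairing is $\mathrm{Sp}(2\ell)$-equivariant in the right sense.

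The main obstacle is the sign bookkeeping in both steps — keeping careful track of how a local move at a vertex, or an orientation reversal along a circuit, changes $c(G,\kappa)$ and matching that parity against the sign of the induced slot-permutation (step 1) or the product of signs coming from \eqref{eq:dual} (step 2). I expect that the cleanest route is to isolate a single "atomic" modification (one transposition at one vertex for $\kappa$; reversing one arc, then fixing up $\kappa$ at its two endpoints, for $\omega$), prove the sign identity for that atom by a direct but careful trace through the circuit decomposition, and then note that the general case follows since any two elements of $\G^\#$ over the same $G$ are connected by a finite sequence of atoms. A secondary technical point to handle carefully is that reversing a single arc need not preserve Eulerian-ness of the orientation, which is why one must reverse a whole circuit at once (or equivalently reverse arcs in a way that keeps in-degree equal to out-degree at every vertex throughout); working circuit-by-circuit sidesteps this.
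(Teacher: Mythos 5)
Your two-step plan --- first show independence of $\kappa$ for fixed $\omega$, then reduce the change of $\omega$ to reversing circuits of the symmetric difference of the two orientations --- is exactly the structure of the paper's proof (Lemma~\ref{lem:useful}), and your step~1 argument (a transposition in $\kappa^+_v$ or $\kappa^-_v$ flips the parity of $c(G,\kappa)$, while skew-symmetry of $h$ flips the sign of the factor at $v$, so the two cancel) is precisely what the paper does.

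Where your step~2 goes slightly astray is in the sign accounting. If you reverse a $\kappa$-circuit $C$ and adjust $\kappa$ in the natural way (interchange the two consecutive positions used by $C$ at each vertex of $C$), the $\kappa$-circuit decomposition is unchanged as an edge-partition: $C$ becomes $C$ traversed backwards, and all other circuits are untouched. Hence $c(G,\kappa)$ does \emph{not} change, and $(-1)^{c(G,\kappa)}$ contributes nothing to the cancellation. What actually cancels is this: the substitution that re-labels colors along $C$ picks up one $\pm 1$ per arc of $C$ from the definition of $f_i$ in \eqref{eq:dual}, giving $(-1)^{|C|}$, and independently each of the $|C|$ vertex-occurrences of $C$ contributes a $-1$ because the two tensor slots used by $C$ at that vertex get swapped and $h$ is alternating; these two $(-1)^{|C|}$ factors cancel each other. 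The paper makes this transparent by introducing the refinement $s_{h,\phi}$ (fixing $\phi:E\to[\ell]$ and summing over a residual $\psi:E\to\{0,\ell\}$, so the dual-basis sign becomes the clean factor $(-1)^{o(\psi)}$) and proving the stronger statement that each $s_{h,\phi}$ is individually invariant (Lemma~\ref{lem:inv cycle}). Your ``direct but careful trace'' would in effect need to reinvent that decomposition: without it, the sign from \eqref{eq:dual} is not a clean $(-1)^{|C|}$ but depends on which colors $\phi$ assigns to the edges of $C$, and the cancellation with the skew-symmetry factor is harder to see. So the route is right, but the mechanism you name --- absorption by $(-1)^{c(G,\kappa)}$ and the freedom in choosing $\kappa'$ --- is not the one that does the work.
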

We postpone the proof of this proposition to the end of this section. 
It allows us to define the following graph parameter.

\begin{df}\label{df:skew}
Let $h \in \bigwedge(V_{2 \ell}^*)$. Then we define the \emph{skew-partition function\footnote{The name skew-partition function is chosen so as to distinguish it from the ordinary partition function; it is moreover motivated by the fact that $s_h(G)$ can be seen as the contraction of a tensor network with respect to a skew-symmetric form.}} of $h$ to be the graph invariant defined for a graph $G=(V,E)$ by 
\begin{equation}
s_h(G) :=\left \{ \begin{array}{ll}
 s_h((G,\omega,\kappa))& \text{ for any $(G,\omega,\kappa) \in \mathcal{G}^{\#}$, if $G$ is Eulerian},
 \\
 0 &\text{ otherwise.}\end{array}\right.
\end{equation}
Notice that $s_h(\bigcirc) = -2 \ell$.
\end{df}
\begin{rem}
Setting $s_h(G) = 0$ for non-Eulerian graphs $G$ might seem unnatural at first, but the equivalence between $(ii)$ and $(iii)$ in Theorem \ref{thm:main} below shows that this is in fact the most natural definition for these functions. 
 \end{rem}
 Let us give an example.
\begin{ex}
Let $\ell=1$ and let $h\in \bigwedge(V_2^*)$ be defined by $h(e_1\otimes e_2) = -1$ and $h(e_2\otimes e_1) = 1$.
Then \[s_h(G)=\left \{\begin{array}{l}(-2)^{c(G)} \text{ if } G \text{ is } 2\text{-regular,}
\\
0\text{ otherwise.}\end{array}\right.\]
This example can be generalised quite a bit. 
Readers familiar with the cycle partition polynomial will recognise that $s_h(G)$ is equal to the cycle partition polynomial evaluated at $-2$.
In Section \ref{sec:martin} we shall show that we can realize evaluations of the cycle partition polynomial at any negative even integer as the partition function of a suitable skew-symmetric tensor.
\end{ex}
One of our main results in this paper is a characterization of graph parameters that are skew-partition functions of edge-coloring models, which we will state below after we introduce some terminology.
A graph parameter $f:\G\to \C$ is called \emph{multiplicative} if $f(F_1\cup F_2)=f(F_1)f(F_2)$ for all $F_1,F_2\in \G$ and $f(\emptyset)=1$.
For a graph $G=(V,E)$ and $U\subseteq E$ of size $\ell+1$, give every element of $U$ an orientation and label the vertices of $G$ incident with edges in $U$ as follows. 
Vertices incident with outgoing arcs are labeled with $[\ell +1]$, and vertices incident with incoming arcs are labeled with $[2\ell+2]\setminus [\ell+1]$ (some vertices may receive multiple labels this way). 
Then for $\pi\in S_{2\ell+2}$, let $G_{U,\pi}$ be the graph obtained from $G$ by removing $U$ from $E$ and adding the edges $\{\pi(i),\pi(\ell+1+i)\}$ for $i\in [\ell+1]$ to $E\setminus U$. 
(We note that the graph $G_{U,\pi}$ depends on the choice of orientation and on the labeling even though this not indicated.)
We can now state our main theorem, characterizing skew-partition functions, which we prove in Section \ref{sec:proof}.
\begin{theorem}\label{thm:main}
Let $f:\G\to \C$ be a graph parameter. Then the following are equivalent:
\begin{itemize}
\item[(i)] there exists a skew-symmetric tensor $h\in \bigwedge(V^*_{2 \ell})$ for some $\ell \in \N$ such that $f(G)=s_h(G)$,
\item[(ii)] $f$ is multiplicative, $f(\emptyset)=1$, $f(G)=0$ if $G$ is not Eulerian, and for some $\ell\in \N$, $f(\bigcirc) = -2 \ell$ and $\sum_{\pi\in S_{2\ell+2}}f(G_{U,\pi})=0$ for each graph $G=(V,E)\in \E$ and $U\subseteq E$ of size $\ell+1$,
\item[(iii)] $f(\emptyset)=1$, $f(\bigcirc)<0$ and for each $k\in \N$, the rank of $M_{f,2k}$ is bounded by $(f(\bigcirc))^{2k}$.
\end{itemize}
Moreover, the equivalence between (i) and (ii) holds with the same $\ell$.
\end{theorem}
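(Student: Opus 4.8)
The plan is to prove the cycle of implications $(i)\Rightarrow(ii)\Rightarrow(iii)\Rightarrow(i)$, keeping track of $\ell$ along the way so that the last sentence ($(i)\Leftrightarrow(ii)$ with the same $\ell$) comes for free.

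For $(i)\Rightarrow(ii)$: given $h\in\bigwedge(V^*_{2\ell})$, multiplicativity and the values on $\emptyset$, $\bigcirc$, and non-Eulerian graphs are immediate from the definition of $s_h$ in \eqref{eq:def functie} together with Proposition \ref{prop:inv keuze} (the product over components factors, and the sum in \eqref{eq:def functie} is empty or the circuit count is undefined exactly when $G$ is not Eulerian — here one uses Definition \ref{df:skew}). The substantive point is the identity $\sum_{\pi\in S_{2\ell+2}}f(G_{U,\pi})=0$. I would first reduce to the case where all $\ell+1$ edges of $U$ meet a single vertex $v$ of degree $2\ell+2$ (the general case follows because the orientation/labelling conventions let us treat the endpoints of $U$ as a single ``formal'' vertex of degree $2\ell+2$, or by a gluing argument splitting $v$). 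At such a vertex the local contribution is a single evaluation of $h$ on a decomposable $(2\ell+2)$-tensor; summing over $\pi\in S_{2\ell+2}$ and using that $h$ is skew-symmetric, the sum becomes (up to sign and the fixed orientation data) a multiple of the full antisymmetrization of a $(2\ell+2)$-tensor on a $2\ell$-dimensional space, which vanishes because $\bigwedge^{2\ell+2}(V^*_{2\ell})=0$. The bookkeeping of signs — the $(-1)^{c(G,\kappa)}$ factor interacting with how reconnecting via $\pi$ changes the number of circuits, and the signs $f_i=\pm e_{i\pm\ell}$ — is where care is needed, but it is routine once the vanishing mechanism is identified.

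For $(iii)\Rightarrow(i)$: this is the hard implication and I expect to lean on the framework of Section \ref{sec:framework} (modelled on \cite{DGLRS12}), so I would defer the full argument to Section \ref{sec:proof} and only sketch it here. The strategy is the standard one for these characterization theorems: use the rank bounds on $M_{f,2k}$ to build a finite-dimensional algebra (a quotient of the fragment algebra by the kernel of the bilinear form $(F_1,F_2)\mapsto f(F_1*F_2)$), show it has dimension at most $(2\ell)^{2k}$ in degree $2k$ with $2\ell=-f(\bigcirc)$, and realize this algebra inside the $GL$- or rather $Sp$-invariant setting so that the ``merge two open ends'' operation corresponds to contraction with a skew-symmetric bilinear form. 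Then a first fundamental theorem / reconstruction argument (the symplectic analogue of what appears in \cite{DGLRS12, S15}) produces the tensor $h\in\bigwedge(V^*_{2\ell})$ with $f=s_h$. The skew-symmetry of $h$ is forced by the sign $(-1)^{c(G,\kappa)}$ and the fact that $f$ vanishes on non-Eulerian graphs.

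For $(ii)\Rightarrow(iii)$: the values $f(\emptyset)=1$ and $f(\bigcirc)=-2\ell<0$ are given, so only the rank bound $\rk(M_{f,2k})\le(2\ell)^{2k}$ remains. The cleanest route, and the one I would take, is to use $(i)$: having established $(ii)\Rightarrow(i)$ — which we do via $(ii)\Rightarrow(iii)$... — instead I would prove $(ii)\Rightarrow(iii)$ directly and independently by exhibiting, from the combinatorial data in $(ii)$, an explicit spanning set of size $(2\ell)^{2k}$ for the columns of $M_{f,2k}$: each $2k$-fragment is evaluated, after choosing an Eulerian orientation and local ordering (extending it through the open ends), by a sum over colourings $[2\ell]$ of the ``dangling'' half-edges, so $M_{f,2k}(F_1,F_2)$ depends on $F_1$ only through the vector indexed by such colourings, a vector in a space of dimension $(2\ell)^{2k}$. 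The one subtlety is that open ends must be handled consistently with the orientation/ordering conventions and the $(-1)^{c}$ sign; once $(i)\Rightarrow(ii)$ and this direct argument are in place, the equivalences close up.

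I expect the implication $(iii)\Rightarrow(i)$ to be the main obstacle, precisely because it requires setting up the symplectic invariant-theory dictionary of Section \ref{sec:framework} and running a first-fundamental-theorem argument; the other steps are sign-careful but conceptually routine.
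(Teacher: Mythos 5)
Your overall structure differs from the paper's: you propose the cycle $(i)\Rightarrow(ii)\Rightarrow(iii)\Rightarrow(i)$, whereas the paper proves $(i)\Leftrightarrow(ii)$ (via the framework of Section~\ref{sec:framework}), then $(iii)\Rightarrow(ii)$, and then $(i)\Rightarrow(iii)$. There are two genuine gaps in your plan that this difference exposes.

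First, your sketch of $(iii)\Rightarrow(i)$ never addresses the most delicate point: from $(iii)$ you only know $f(\bigcirc)<0$ and the rank bounds, and before you can even set up the symplectic machinery you must show that $f(\bigcirc)$ is a negative \emph{even} integer, i.e.\ equals $-2\ell$ for some $\ell\in\N$. This is nontrivial. The paper first invokes Schrijver's Proposition~3 to get $f(\bigcirc)\in\Z$, and then proves Lemma~\ref{lem:rank growth} using the Hanlon--Wales description of the eigenspaces of the Brauer-type matrices $N_{n,2k}$ and the hook-length formula (including a Stirling asymptotic to rule out odd values). Your ``standard reconstruction / FFT'' sketch assumes the dimension $2\ell$ is available, but deriving it is a substantial step; without it the rest of the reconstruction cannot start. (The paper's $(iii)\Rightarrow(ii)$ proof also needs Schrijver's idempotent machinery to get the vanishing on non-Eulerian graphs and the $S_{2\ell+2}$-relation, none of which is in your sketch.)

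Second, your proposed ``direct and independent'' proof of $(ii)\Rightarrow(iii)$ is circular as stated. The spanning-set argument you describe — evaluate each $2k$-fragment by a sum over colourings of the dangling half-edges, so that $M_{f,2k}(F_1,F_2)$ depends on $F_1$ only through a vector of length $(2\ell)^{2k}$ — presupposes that $f$ is given by a colouring model, i.e.\ it presupposes $(i)$. Condition $(ii)$ is purely combinatorial and gives you no colourings to sum over. Indeed this is exactly the paper's proof of $(i)\Rightarrow(iii)$: build the tensor $t_h(F,\omega,\kappa)\in V_{2\ell}^{\otimes 2k}$ and exhibit $M_{f,2k}$ as (essentially) a Gram matrix via \eqref{eq:gramm}. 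Once you accept that, the natural closure of the equivalences is the paper's: prove $(i)\Leftrightarrow(ii)$ via Theorem~\ref{thm:char od} (Nullstellensatz plus the FFT and the Hanlon--Wales second fundamental theorem, which is where your ``$\bigwedge^{2\ell+2}(V_{2\ell}^*)=0$'' intuition is made rigorous as the description of $\ker p$), then $(i)\Rightarrow(iii)$ by the Gram-matrix argument, and separately $(iii)\Rightarrow(ii)$ by the Hanlon--Wales/Schrijver rank analysis.

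Your high-level intuition for $(i)\Rightarrow(ii)$ — that the $S_{2\ell+2}$-sum vanishes because one is antisymmetrizing a $(2\ell+2)$-tensor over a $2\ell$-dimensional space — is the right heuristic, but be aware that the paper does not argue vertex-locally; the relation involves an arbitrary set $U$ of $\ell+1$ edges with arbitrary endpoints, and the rigorous form is the identification of $\ker p$ with $\mathcal{I}_\ell$ (Proposition~\ref{prop:im and ker p}), transported to undirected Eulerian graphs via the map $\iota$ and equations \eqref{eq:ker p na iota}--\eqref{eq:image iota}.
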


\subsection{Proof of Proposition 1}
Before proving the proposition, we introduce a quantity that will turn out to be useful later on.
For $(G,\omega,\kappa)\in \G^\#$, $h\in \bigwedge (V_{2 \ell}^*)$ and a map $\phi:E(G)\to [\ell]$ consider
\begin{equation}\label{eq:function phi}
s_{h,\phi}(G,\omega,\kappa):=(-1)^{c(G,\kappa)}\sum_{\psi:E(G)\to \{0,\ell\}}\prod_{v\in V(G)}h(\bigotimes_{i=1,3,\ldots,d(v)-1} e_{(\phi+\psi)(\kappa^{-1}_v(i))} \otimes f_{(\phi+\psi)(\kappa^{-1}_v(i+1))}),
\end{equation}
where $(\phi+\psi):E(G)\to[2\ell]$ is defined as $e\mapsto \phi(e)+\psi(e)$ for $e\in E(G)$. Notice that $s_h((G,\omega,\kappa))$ is equal to the sum of the $s_{h,\phi}((G,\omega,\kappa))$ over all maps $\phi : E(G) \rightarrow [\ell]$.

\begin{lemma}\label{lem:inv cycle}
Let $(G,\omega,\kappa) \in \mathcal{G}^{\#}$ and let $\phi:E(G)\to [\ell]$.
Let $C$ be a $\kappa$-circuit and let $\omega'$ be the Eulerian orientation obtained from $\omega$ by inverting the orientation of the edges in $C$. Let $\kappa'$ be obtained from $\kappa$ by flipping the values of two consecutive edges of $C$ incident with a vertex $v$ for all vertices $v$ of $C$.
Then $s_{h,\phi}(G,\omega,\kappa) = s_{h,\phi}(G,\omega',\kappa')$.
\end{lemma}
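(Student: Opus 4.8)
The plan is to show that the combined effect of reversing the orientation of a single $\kappa$-circuit $C$ and correspondingly adjusting the local ordering leaves each term of the sum \eqref{eq:function phi} unchanged, and that the prefactor $(-1)^{c(G,\kappa)}$ is also unchanged. First I would observe that $\kappa'$ is again a local ordering compatible with $\omega'$: at each vertex $v$ on $C$, exactly two arcs (one in, one out, consecutive in the $\kappa$-ordering at $v$) get their orientation flipped, and swapping their $\kappa$-values $i$ and $i+1$ restores the parity requirement (odd values on incoming arcs, even on outgoing). Moreover the circuit decomposition is literally the same set of closed walks — only the traversal direction of $C$ has changed — so $c(G,\omega',\kappa') = c(G,\omega,\kappa)$ and the sign prefactors agree. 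It also follows that the set of maps $\psi: E(G)\to\{0,\ell\}$ indexing the inner sum is the same, so it suffices to fix $\psi$, write $\chi := \phi+\psi : E(G)\to[2\ell]$, and compare the two vertex-product weights term by term.

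The heart of the argument is a local computation at each vertex $v$ of $C$. Away from $C$ nothing changes, so consider $v\in V(C)$ and let $a$ be the incoming arc of $C$ at $v$ and $b$ the outgoing arc of $C$ at $v$, with $\kappa^-_v(a) = i$ and $\kappa^+_v(b) = i+1$ (this consecutiveness is exactly what defines a $\kappa$-circuit). In the $(\omega,\kappa)$ weight the factor contributed by the slots $i,i+1$ at $v$ is $e_{\chi(a)}\otimes f_{\chi(b)}$; after the flip, $a$ becomes outgoing with $\kappa'^+_v(a)=i+1$ and $b$ becomes incoming with $\kappa'^-_v(b)=i$, so the corresponding factor becomes $e_{\chi(b)}\otimes f_{\chi(a)}$. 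I would then use the key identity $f_{j+\ell} = e_j$ and $f_j = -e_{j+\ell}$ (indices mod $2\ell$), equivalently the fact that $e\mapsto f$ is (up to sign) an involution implementing the symplectic form: one checks $e_{\chi(a)}\otimes f_{\chi(b)}$ and $e_{\chi(b)}\otimes f_{\chi(a)}$ differ, after the substitution $\chi\mapsto\chi'$ on the circuit edges where $\chi'(e)$ is obtained by adding $\ell \pmod{2\ell}$, by a sign that I must track. Since each edge of $C$ lies in exactly two such consecutive pairs (at its two endpoints along $C$, or twice at one vertex if it is a loop of $C$), and each vertex of $C$ is visited a number of times equal to its multiplicity in $C$, the local sign contributions must be shown to telescope around the closed circuit and cancel globally.

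The main obstacle I anticipate is precisely this sign bookkeeping: making the substitution $\psi\mapsto\psi'$ explicit so that the product over vertices of $C$ is manifestly the original product, while verifying that the accumulated $(-1)$'s from the $f_i = \pm e_{\cdot}$ rules multiply to $+1$ around $C$. The clean way to organize this is to introduce, for the fixed $\psi$, a new map $\psi'$ that re-routes the "$0$ versus $\ell$" choice along $C$ so that $\chi'(e) = \chi(e)$ for $e\notin C$ and $\chi'(e) \equiv \chi(e)+\ell$ for $e\in C$, note that $\psi\mapsto\psi'$ is a bijection on $\{0,\ell\}^{E(G)}$, and then check that the $v$-factor transforms as $e_{\chi(a)}\otimes f_{\chi(b)} \mapsto (e_{\chi'(b)}\otimes f_{\chi'(a)})$ up to a sign depending only on whether $\chi(a),\chi(b)$ exceed $\ell$; summing these exponents along $C$ gives an even number because entering and leaving each vertex pairs the contributions. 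Once the per-term equality of weights is established and combined with the equality of the sign prefactors, reindexing the inner sum over $\psi$ by the bijection $\psi\mapsto\psi'$ yields $s_{h,\phi}(G,\omega,\kappa) = s_{h,\phi}(G,\omega',\kappa')$, completing the proof. I would remark that summing this identity over all $\phi:E(G)\to[\ell]$ immediately gives the orientation/ordering-reversal invariance needed for Proposition 1, modulo the separate (easy) fact that any two Eulerian orientations and compatible local orderings are connected by such circuit reversals together with purely local re-orderings at a single vertex, which must be handled to finish Proposition 1 itself.
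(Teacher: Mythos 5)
Your overall plan is the right one and matches the paper's: you correctly note $c(G,\kappa')=c(G,\kappa)$, you correctly identify that the right comparison pairs $\psi$ in the first sum with $\psi'$ (obtained by shifting $\psi$ by $\ell$ on the edges of $C$) in the second, and you correctly reduce to a local comparison at each vertex visit of $C$. However, there is a genuine gap in the final sign-accounting step, and it stems from never invoking the skew-symmetry of the tensor $h$.

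Concretely, you assert that the local factor transforms as $e_{\chi(a)}\otimes f_{\chi(b)}\mapsto e_{\chi'(b)}\otimes f_{\chi'(a)}$ \emph{up to a sign}. That is false at the level of tensors: using $f_j=(-1)^{[j\le\ell]}e_{j+\ell\bmod 2\ell}$ one gets
\[
e_{\chi(a)}\otimes f_{\chi(b)}=(-1)^{[\chi(b)\le\ell]}\,e_{\chi(a)}\otimes e_{\chi(b)+\ell},\qquad
e_{\chi'(b)}\otimes f_{\chi'(a)}=(-1)^{[\chi(a)>\ell]}\,e_{\chi(b)+\ell}\otimes e_{\chi(a)},
\]
so the two arguments fed to $h$ at those slots are \emph{transposed}, not proportional. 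The transposition is converted into a factor $-1$ only because $h\in\bigwedge(V^*_{2\ell})$ is skew-symmetric; you never use this. Moreover, once you account correctly, the signs from the $f$-to-$e$ conversion alone do \emph{not} telescope to $+1$ around $C$: each edge of $C$ supplies one such $f$, and the total ratio of these signs between the two sides is $(-1)^{|C|}$ (equivalently, $o(\psi)$ and $o(\psi')$ differ by $|C|$). Your "even number / pairing" claim for those exponents is therefore incorrect. What saves the day is the separate contribution $(-1)^{|C|}$ coming from applying the skew-symmetry of $h$ once at each of the $|C|$ vertex visits of $C$; the two factors of $(-1)^{|C|}$ cancel. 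The paper organizes this cleanly by first rewriting all $f$'s as $e$'s at once to pull out a global prefactor $(-1)^{o(\psi)}$, and then observing (a) $o(\psi')-o(\psi)\equiv|C|\pmod 2$, and (b) at each vertex visit the two slots are swapped, so skew-symmetry of $h$ gives $(-1)^{|C|}$. You should add the invocation of skew-symmetry of $h$ and correct the telescoping claim; otherwise the proof as written does not close.
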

\begin{proof}
Write $G=(V,E)$. We may assume that no component of $G$ is a circle.
Define for $\psi:E\to \{0,\ell\}$, $o(\psi):=|\psi^{-1}(0)|$, the number of edges that get assigned $0$ by $\psi$.
We can now, by definition of the $f_i$, rewrite \eqref{eq:function phi} as follows:
\begin{equation}\label{eq:rewrite}
(-1)^{c(G,\kappa)}\sum_{\psi:E\to \{0,\ell\}}(-1)^{o(\psi)}\prod_{v\in V(G)}h(\bigotimes_{i=1,3,\ldots,d(v)-1} e_{(\phi+\psi)(\kappa^{-1}_v(i))} \otimes e_{(\phi+\psi)(\kappa^{-1}_v(i+1))+\ell}),
\end{equation}
where the addition is carried out modulo $2\ell$. Let for $\psi : E \rightarrow \{0,\ell\}$  $\psi' : E \rightarrow \{0,\ell\}$ be defined by
\[
\psi'(e) :=\left \{\begin{array}{ll} \psi(e) &\text{ if } e \not \in C, \\ \psi(e) + \ell\mod 2\ell& \text{ if } e \in C. \end{array}\right.
\label{eq:psi kleur} 
\]
Write $H=(G,\omega,\kappa)$ and $H'=(G,\omega',\kappa')$.
We now compare the contribution of $\psi$ in $s_{h,\phi}(H)$ and $\psi'$ in $s_{h,\phi}(H')$.
Clearly, $o(\psi)$ and $o(\psi')$ differ by $|C|$.
At any vertex $v$ of $C$ with incoming arc $a$ and outgoing arc $a'$, we see $e_{(\phi+\psi)(a)}\otimes e_{(\phi+\psi)(a')+\ell}$ in $s_{h,\phi}(H)$, while in $s_{h,\phi}(H')$ we see $e_{(\phi+\psi)(a')+\ell}\otimes e_{(\phi+\psi)(a)+2\ell}$.
Since $h$ is skew-symmetric these contributions differ by a minus sign.
So by \eqref{eq:rewrite} we conclude that $\psi$ and $\psi'$ have the same contribution. This proves the lemma.
\end{proof}
We now conclude this section by proving the following lemma, which clearly implies Proposition \ref{prop:inv keuze}.

\begin{lemma}\label{lem:useful}
Let $(G,\omega,\kappa)\in \G^\#$ and let $h\in \bigwedge(V_{2 \ell}^*)$.
Then for any map $\phi:E(G)\to [\ell]$, the value $s_{h,\phi}(G,\omega,\kappa)$ is independent of the choice of $\omega$ and $\kappa$.
\end{lemma}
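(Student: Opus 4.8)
The plan is to reduce the statement to Lemma \ref{lem:inv cycle} by showing that any two pairs $(\omega,\kappa)$ and $(\omega',\kappa')$ compatible with the Eulerian structure of $G$ can be connected by a sequence of the elementary moves considered there — inverting one $\kappa$-circuit together with the corresponding flip of consecutive edge-values — possibly composed with moves that do not change the orientation at all but only permute the local-ordering values. First I would deal with the ``pure local ordering'' part: fix an Eulerian orientation $\omega$, and observe that changing $\kappa$ to $\kappa'$ while keeping $\omega$ fixed amounts, at each vertex $v$, to composing $\kappa^-_v$ and $\kappa^+_v$ with permutations of $\{1,3,\ldots,d(v)-1\}$ and $\{2,4,\ldots,d(v)\}$ respectively. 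Since $h$ is skew-symmetric, and in the rewritten form \eqref{eq:rewrite} each vertex contributes $h(\bigotimes_i e_{\bullet}\otimes e_{\bullet+\ell})$ with the tensor factors listed in the order dictated by $\kappa$, swapping two of the pairs $(e_{\bullet},e_{\bullet+\ell})$ at a vertex is an even permutation of the $2k$ tensor slots (a transposition of adjacent blocks of size two), hence leaves $h$ unchanged; so each individual vertex contribution, and therefore the whole sum, is unaffected. One must also check that the decomposition into $\kappa$-circuits, and hence $c(G,\kappa)$, is unchanged when we only reshuffle the pairing data in this block-preserving way — but reordering which ``$1$st'' incoming arc is glued to which ``$2$nd'' outgoing arc within a vertex can in fact change the circuit structure, so this needs a little care: the claim is rather that $s_{h,\phi}$ is invariant under the full change, and the circuit count and the tensor reordering conspire to cancel. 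I would make this precise by noting that an arbitrary change of $\kappa$ (fixing $\omega$) is generated by elementary ``swaps'' at a single vertex exchanging the partners of two consecutive arc-pairs, and that each such swap multiplies the product of vertex tensors by $-1$ (skew-symmetry) while also changing $c(G,\kappa)$ by exactly $1$ (it either merges two circuits passing through $v$ or splits one), so the sign $(-1)^{c(G,\kappa)}$ absorbs it.

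Next I would handle the orientation: given $\omega$ and $\omega'$, their ``difference'' is an edge set $D$ which, because both are Eulerian, forms an even subgraph, hence decomposes into edge-disjoint circuits; but to apply Lemma \ref{lem:inv cycle} I need these to be $\kappa$-circuits for a suitable choice of local ordering. Here the strategy is: given the target orientation $\omega'$, first use the local-ordering invariance just established to replace $\kappa$ by a local ordering $\tilde\kappa$ (still compatible with $\omega$) whose $\kappa$-circuit decomposition refines $D$ in the sense that $D$ is a union of whole $\tilde\kappa$-circuits. Then invert those $\tilde\kappa$-circuits one at a time, each time applying Lemma \ref{lem:inv cycle} to see that $s_{h,\phi}$ is unchanged and the appropriate flip is made to the local ordering; after inverting all of them we arrive at the orientation $\omega'$ with some compatible local ordering $\kappa''$, and a final application of the local-ordering invariance brings $\kappa''$ to $\kappa'$.

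The main obstacle I anticipate is precisely the bookkeeping in the first paragraph — verifying that an elementary re-pairing swap at a vertex simultaneously flips the sign of the vertex-tensor product (clear from skew-symmetry) and changes the number of $\kappa$-circuits by exactly one (a combinatorial fact about gluing/ungluing closed walks at a vertex), so that $(-1)^{c(G,\kappa)}$ times the tensor product is genuinely invariant. The secondary obstacle is arranging, in the second paragraph, that $D$ can be written as a union of $\tilde\kappa$-circuits for some $\tilde\kappa$ compatible with $\omega$; this should follow because at each vertex $v$ the arcs of $D$ incident with $v$ split evenly between $\delta^-(v)$ and $\delta^+(v)$ (since $\omega$ and $\omega'$ are both Eulerian and agree outside $D$), so one can choose $\tilde\kappa$ to pair up $D$-arcs with $D$-arcs and non-$D$-arcs with non-$D$-arcs at every vertex, making each $\tilde\kappa$-circuit either entirely inside $D$ or entirely outside it. Everything else is routine, and the lemma then implies Proposition \ref{prop:inv keuze} upon summing $s_{h,\phi}$ over all $\phi:E(G)\to[\ell]$.
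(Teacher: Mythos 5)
Your proof is correct and follows essentially the same route as the paper's: first invariance under transpositions of $\kappa^-_v$ or $\kappa^+_v$ at a vertex (the sign of the vertex tensor and the parity of $c(G,\kappa)$ both flip, so their product is unchanged), then iterated application of Lemma~\ref{lem:inv cycle} to circuits in the symmetric difference $D$ of $\omega$ and $\omega'$, followed by a final adjustment of the local ordering. You are in fact a bit more careful than the paper in one spot: you explicitly choose a compatible $\tilde\kappa$ that pairs $D$-arcs with $D$-arcs at every vertex so that $D$ decomposes into $\tilde\kappa$-circuits before invoking Lemma~\ref{lem:inv cycle}, a step the paper leaves implicit when it speaks of ``a $\kappa$-circuit in the symmetric difference.''
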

\begin{proof}
Suppose we have $(G,\omega,\kappa)$ and $(G,\omega,\kappa')$ in $\mathcal{G}^{\#}$. If we apply a transposition to $\kappa^+_v$ or $\kappa^-_v$ at some $v$, then the parity of the number of circuits changes. As $h$ is skew-symmetric, the evaluation of the tensor at $v$, cf. (\ref{eq:rewrite}), also changes sign, so these cancel out. So we can apply permutations at each $v$ to $\kappa$ to go from $\kappa$ to $\kappa'$ without changing the value of $s_{h,\phi}$. 

Now consider $(G,\omega,\kappa)$ and $(G,\omega',\kappa')$. The symmetric difference of $\omega$ and $\omega'$, i.e., the set of edges where they do not give the same orientation is again a Eulerian graph with Eulerian orientation given by the restriction of $\omega$. 
Let $C$ be a $\kappa$-circuit in the symmetric difference of $\omega$ and $\omega'$. 
Let $\omega''$ be obtained from $\omega$ by inverting the orientation of the edges of $C$ and let $\kappa''$ be obtained from $\kappa$ by flipping the values of two consecutive edges of $C$ incident with a vertex $v$ of $C$ for each vertex $v$ of $C$.
By Lemma \ref{lem:inv cycle} this does not change the value of $s_{h,\phi}$. 
Repeating this until there are no circuits left in the symmetric difference and then applying the first part of the proof finishes the proof.
\end{proof}

\section{Evaluations of the cycle partition polynomial}\label{sec:martin}
The cycle partition polynomial, first introduced, in a slightly different form, by Martin in his thesis ~\cite{Martin}, is related to Eulerian walks in graphs and to the Tutte polynomial of planar graphs. 
Several identities for the cycle partition polynomial were established by Bollob\'as ~\cite{Bol} and Ellis-Monaghan ~\cite{EM}.

For a graph $G = (V,E)$, let $\mathcal{C}(G)$ be the collections of all partitions of $E$ into circuits. 
For $\cal P \in \mathcal{C}(G)$, let $|\cal P|$ be the number of circuits in the partition. 
The cycle partition polynomial $J(G,x)$ is defined as
\[
J(G,x) := \sum_{\cal P \in \mathcal{C}(G)} x^{|\cal P|}. 
\]
So if $G$ is not an Eulerian graph, then $J(G,x) = 0$. We clearly have that $J(G\cup H,x) = J(G,x) J(H,x)$ and $J(\bigcirc,x) = x$. 
It is shown in~\cite{Bol,EM} that for $k \in \N$, we can express $J(G,k)$ as
\begin{equation}
J(G,k) = \sum_{A} \prod_{v \in V} \prod_{i=1}^k (\text{deg}_{A_i}(v) - 1)!!,\label{eq:cyc part pos}
\end{equation}
where $A$ ranges over ordered partitions of $E$ into $k$ subsets $A_1,\ldots,A_k$ such that $(V,A_i)$ is Eulerian for all $i$, and where, for a positive odd integer $m$, $m!!:=m(m-2)\cdots 1$.
To express \eqref{eq:cyc part pos} as the partition function of a $k$-color edge-coloring model, let $h^n\in (V_k^*)^{\otimes n}$ be defined by taking, for $\phi:[n]\to [k]$, the value $\prod_{i=1}^k (|\phi^{-1}(i)|-1)!!$ at $\bigotimes_{i\in [n]}e_{\phi(i)}$ (for convenience we set $(-1)!!=1$ and $m!!=0$ if $m$ is even).
Then $p_h(G)=J(G,k)$ for each graph $G$. 

We will similarly show that evaluations of $J(G,x)$ at negative even integers can be realized as skew-partition functions of skew-symmetric tensors. 
It follows from work of Bollob\'as \cite{Bol} (see also \cite{EM}) that the evaluation of the cycle partition polynomial $J(G,x)$ of a graph at negative even integers can be expressed as
\begin{equation}
J(G,-2 \ell)=\sum_{H_1,\ldots,H_\ell}(-2)^{\sum_{i=1}^\ell c(H_i)} \ ,\label{eq:cyc part neg}
\end{equation}
where this sum runs over all ordered partitions of the edge set of $G$ into $2$-regular subgraphs $H_1,\ldots,H_\ell$ and $c(H_i)$ denotes the number of components of the graph induced by $H_i$. 

\begin{proposition}
For every $\ell \in \N$, there is an $h \in \bigwedge (V_{2 \ell}^*)$ such that $s_h(G) = J(G,-2 \ell)$ for each graph $G$. 
\end{proposition}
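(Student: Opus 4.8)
The plan is to take for $h$ the exterior exponential of minus the standard symplectic form. Set $\gamma:=-\sum_{i=1}^{\ell}e_i^*\wedge e_{i+\ell}^*\in\bigwedge^2(V_{2\ell}^*)$ and let
\[
h:=\sum_{m\ge0}\frac1{m!}\gamma^{\wedge m}\ \in\ \bigwedge(V_{2\ell}^*).
\]
Since $2$-forms commute and $(e_i^*\wedge e_{i+\ell}^*)^{\wedge2}=0$, one has $h^{2m+1}=0$ and
\[
h^{2m}=\frac1{m!}\gamma^{\wedge m}=(-1)^m\sum_{S\subseteq[\ell],\,|S|=m}\ \bigwedge_{i\in S}\bigl(e_i^*\wedge e_{i+\ell}^*\bigr),
\]
which vanishes for $m>\ell$; for $\ell=1$ the degree-$2$ part of $h$ is the tensor of Example~1. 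The only fact about $h$ that I shall need is the vertex identity $(e_j^*\wedge e_{j+\ell}^*)(e_a\otimes f_b)=-1$ if $a=b$ and $0$ if $a\ne b$, valid for all $j\in[\ell]$ and $a,b\in\{j,j+\ell\}$, which is immediate from \eqref{eq:dual}.

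Next, for Eulerian $G$ I write $s_h(G)=\sum_{\bar\phi:E(G)\to[\ell]}s_{h,\bar\phi}(G,\omega,\kappa)$, as observed just after \eqref{eq:function phi}; by Lemma~\ref{lem:useful} each summand is independent of the triple in $\G^\#$, so I may evaluate different summands on different triples. The first step is a slot count: in \eqref{eq:function phi}, after replacing each $f_b$ by $\pm e_{\bar b}$, the argument of $h^{2m_v}$ at a vertex $v$ of degree $2m_v$ is a decomposable tensor whose multiset of basis-block labels is $\{\bar\phi(e):e\ni v\}$, i.e.\ contains block $i$ with multiplicity $\deg_{\bar\phi^{-1}(i)}(v)$; by the formula for $h^{2m}$ above, such a tensor is annihilated unless every block occurring at $v$ occupies exactly two slots. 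Hence $s_{h,\bar\phi}=0$ unless the edge-induced subgraph $H_i:=(V,\bar\phi^{-1}(i))$ is $2$-regular for every $i$; in particular $s_h(G)=0$ as soon as $G$ has a vertex of degree $>2\ell$, matching $J(G,-2\ell)=0$ there.

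It remains to compute $s_{h,\bar\phi}(G,\omega,\kappa)$ when every $H_i$ is $2$-regular, hence a disjoint union of cycles. Here I would choose $\omega^{\bar\phi}$ to orient each such cycle cyclically — an Eulerian orientation of $G$ — and choose $\kappa^{\bar\phi}$ so that at every vertex $v$ the consecutive pairs $(\kappa_v^{-1}(2k-1),\kappa_v^{-1}(2k))$ are exactly the pairs $(a^-_{i_k},a^+_{i_k})$ formed by the entering and the leaving block-$i_k$ edge, where $i_1<\dots<i_{m_v}$ are the blocks at $v$. With this choice no $\kappa$-circuit changes block, so the $\kappa$-circuits are precisely the cycles of $H_1,\dots,H_\ell$ and $c(G,\kappa^{\bar\phi})=\sum_i c(H_i)$; moreover, when $h^{2m_v}$ is evaluated at the argument at $v$, only the term $S=\{i_1,\dots,i_{m_v}\}$ survives, namely $(-1)^{m_v}\bigwedge_{k=1}^{m_v}(e_{i_k}^*\wedge e_{i_k+\ell}^*)$, and the tensor at $v$ splits block-by-block in the same order, so the resulting evaluation matrix is block-diagonal in $2\times2$ blocks and no reordering sign appears. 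Applying the vertex identity above, the two factors $(-1)^{m_v}$ cancel and the contribution of $v$ equals $1$ if $(\bar\phi+\psi)(a^-_{i_k})=(\bar\phi+\psi)(a^+_{i_k})$ for every $k$ and $0$ otherwise; the product over all vertices is therefore $1$ precisely when $\bar\phi+\psi$ — equivalently $\psi$, as $\bar\phi$ is already constant on each circuit — is constant on each of the $\sum_i c(H_i)$ circuits. Summing over the $2^{\sum_i c(H_i)}$ such $\psi$ and multiplying by $(-1)^{c(G,\kappa^{\bar\phi})}$ gives $s_{h,\bar\phi}(G,\omega^{\bar\phi},\kappa^{\bar\phi})=(-2)^{\sum_i c(H_i)}$. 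Finally, $\bar\phi\mapsto(\bar\phi^{-1}(1),\dots,\bar\phi^{-1}(\ell))$ is a bijection between those $\bar\phi$ with all parts $2$-regular and ordered partitions of $E(G)$ into $2$-regular subgraphs, so summing over $\bar\phi$ and comparing with \eqref{eq:cyc part neg} yields $s_h(G)=J(G,-2\ell)$, while both sides vanish when $G$ is not Eulerian. The main obstacle is this last step — setting up the adapted pair $(\omega^{\bar\phi},\kappa^{\bar\phi})$ and verifying that the block-diagonal structure it forces really does kill all reordering signs; once Lemma~\ref{lem:useful} allows a choice of $(\omega,\kappa)$ per $\bar\phi$, what remains is essentially the $\ell=1$ computation carried out simultaneously over the $\ell$ blocks.
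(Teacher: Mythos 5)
Your proposal is correct and follows essentially the same route as the paper: decompose $s_h=\sum_{\bar\phi}s_{h,\bar\phi}$, use Lemma~\ref{lem:useful} to pick a per-$\bar\phi$ orientation and ordering adapted to the colour classes, show that non-$2$-regular classes kill the term, compute $(-2)^{\sum_i c(H_i)}$ for $2$-regular classes, and invoke \eqref{eq:cyc part neg}. The only real difference is that you package the tensor $h$ as the exterior exponential $\sum_m\frac1{m!}\gamma^{\wedge m}$ of $\gamma=-\sum_i e_i^*\wedge e_{i+\ell}^*$ rather than defining it by its values $(-1)^k\mathrm{sgn}(\pi)$ on basis tensors as the paper does; the two descriptions give the same element of $\bigwedge(V_{2\ell}^*)$, and your form makes the vertex identity and the block-diagonal factorization at each vertex slightly more transparent, but the argument is otherwise the same.
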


\begin{proof}
Let $h \in \Lambda(V_{2 \ell}^*)$ be defined as follows. For $T \subset [\ell]$ of size $k$, and for $\pi$ in the group of permutations of $T \cup (T+\ell)$, we let
\[
h(\bigotimes_{t \in T} e_{\pi(t)} \otimes e_{\pi(t+\ell)}) = (-1)^k \text{sgn}(\pi),
\]
and let $h$ be zero on tensors that are not of this form. 
We will show that $J(G,-2\ell) = s_h(G)$. 

Take any $(G,\omega,\kappa)\in \G^\#$.
Let $\phi:E(G)\to [\ell]$ and, let for $i\in [l]$, $H_i:=\phi^{-1}(i)$. Abusing notation, the graph induced by $H_i$ will be denoted by $H_i$.
We will show that the contribution of $s_{h,\phi}(G,\omega,\kappa)$ to \eqref{eq:function phi} is equal to
\begin{equation}\label{eq:contribution}
\left\{\begin{array}{cl}0 &\text{ if some $H_i$ is not $2$-regular,}
\\ (-2)^{\sum_{i=1}^\ell c(H_i)} & \text{ if each $H_i$ is $2$-regular,}
\end{array}\right.
\end{equation}
where we use the convention that the empty graph is $2$-regular.
By \eqref{eq:cyc part neg}, this implies the proposition.

To prove \eqref{eq:contribution}, suppose first that some $H_i$ is not $2$-regular. Then $H_i$ has a vertex $v$ of odd degree.
Fix any $\psi:E(G)\to \{0,\ell\}$ and consider $\phi'=\phi+\psi$.
If $\deg(v)=1$, we cannot see both color $i$ and $i+\ell$ at $v$, implying by the definition of $h$ that the contribution of $\phi'$ to the skew-partition function is zero.
Similarly, if $\deg(v)>2$, we see color $i$ or $i+\ell$ at least twice at $v$, implying by skew-symmetry of $h$ that the contribution of $\phi'$ is zero.

Suppose now that each $H_i$ is $2$-regular.
Let us choose another Eulerian orientation $\omega'$ by choosing a cyclic orientation of all cycles of each of the  $H_i$. Take a compatible local ordering $\kappa'$, such that for two consecutive edges $a$ and $a'$ in a cycle, we have $\kappa(a')-\kappa(a) = 1$.
Then $c(G,\kappa')=\sum_{i=1}^\ell c(H_i)$.
By Lemma \ref{lem:useful} we know that $s_{h,\phi}(G,\omega,\kappa)=s_{h,\phi}(G,\omega',\kappa')$.
For $\psi:E(G)\to \{0,\ell\}$, the contribution of $\psi$ to the sum \eqref{eq:rewrite} is zero if $\psi$ is not constant on each cycle, by skew-symmetry of $h$, as by construction $\phi$ is constant on each cycle.
So let us assume that $\psi$ is constant on each cycle. 
Consider first the case where $\psi(e)=0$ for all $e\in E(G)$. Then $o(\psi)=|E(G)|$ and so the parity of $o(\psi)$ is equal the parity of the number of vertices with degree $2\mod 4$.
By definition of $h$, the $(-1)^k$ factor cancels this and so the contribution of $\psi$ is equal to $(-1)^{\sum_{i=1}^\ell c(H_i)}$.

We next show that if $\psi'$ is obtained from $\psi$ by changing the value on a cycle $C$ of some $H_i$, then they have the same contribution.
This follows from the fact that the parities of $o(\psi)$ and $o(\psi')$ differ by the parity of $|C|$ and the fact that at each vertex of $C$ we have interchanged $e_i\otimes e_{i+\ell}$ with $e_{i+\ell}\otimes e_{i}$, resulting in a factor of $(-1)^{|C|}$ by the skew-symmetry of $h$.
By \eqref{eq:rewrite}, this proves \eqref{eq:contribution} and finishes the proof of the proposition.
\end{proof}
We remark that the cycle partition polynomial evaluated at $x\notin \Z$ cannot be realized as the partition function, nor as the skew-partition function, of any edge-coloring model.
This follows from Theorem \ref{thm:main} and \eqref{eq:thm lex} and the fact that by \cite[Proposition 2]{S15} the rank of the submatrix of $M_{J(\cdot,x),2n}$ indexed by fragments in which each component consists of an unlabeled vertex with two open ends incident with it, has rank at least $n!$.
The case where $x$ is a negative odd integer is discussed in Section \ref{sec:remarks}.

\section{Framework}\label{sec:framework}
In this section we will look at locally ordered directed graphs, i.e., directed graphs in which each vertex is equipped with a total order of the arcs incident with it.
We will define skew-partition functions for these locally ordered directed graphs and characterize a class of these functions using the Nulstellensatz and the invariant theory of the symplectic group. This approach is similar to the characterization of partition functions of edge-coloring models in \cite{DGLRS12}.
This characterization will then be used to prove Theorem \ref{thm:main} in Section \ref{sec:proof}.

\subsection{Locally ordered directed graphs and skew-partition functions}
A \emph{locally ordered directed graph} is a triple $(V,A,\kappa)$, where $(V,A)$ is a directed graph (we allow multiple arcs and loops), and where $\kappa=(\kappa_v)_{v\in V}$, with $\kappa_v:[\deg(v)]\to \delta(v)$ is a bijection for $v\in V$ (here $\deg(v)$ denotes the total degree of $v$).
This gives $\delta(v)$, the arcs incident with $v$ a total ordering.
We note that in case $a$ is a directed loop at $v$, $a$ occurs twice in $\delta(v)$.
Let $\D$ denote the collection of locally ordered directed graphs. 
We often just write graph instead of locally ordered directed graph for elements of $\D$.
We call a map $f:\D\to \C$ an \emph{invariant of locally ordered directed graphs} if $f$ is constant on isomorphism classes of locally ordered directed graphs.

For $G=(V,A,\kappa)\in \D$ and $\phi:A\to [2\ell]$ (for some $\ell\in \N$), we set $o(\phi):=|\phi^{-1}([\ell])|$, the number of images of $\phi$ that land in $[\ell]$.
For $v\in V$, we let $\phi_v$ be the map $\phi_v:[\deg(v)]\to [2\ell]$ defined as follows: for $i\in [\deg(v)]$, 
\begin{equation}\label{eq:def phi_v}
\begin{array}{ll}
\phi_v(i)=\phi(\kappa_v(i))& \text{ if } \kappa_v(i)\in \delta^-(v),
\\
\phi_v(i)=\phi(\kappa_v(i))+\ell\mod 2\ell& \text{ if } \kappa_v(i)\in \delta^+(v),
\end{array}
\end{equation}
where $\delta^+(v)$ is the set of outgoing arcs and $\delta^-(v)$ is the set of incoming arcs at $v$. Let $V_{2 \ell}:=\C^{2\ell}$ for some $\ell\in \N$ and let $h=(h^n)_{n\in \N}$ with $h^n\in (V^*_{2 \ell})^{\otimes n}$; we call $h$ a \emph{$2\ell$-color edge-coloring model}. (We often omit the reference to $\ell$.)
The \emph{skew-partition function} of $h$ is the invariant of locally ordered directed graphs $s_h:\D\to \C$ defined for $G=(V,A,\kappa)\in \D$ by
\begin{equation}
s_h(G)= \sum_{\phi:A\to [2\ell]}(-1)^{o(\phi)}\prod_{v\in V}h(\phi_v),\label{eq:part skew}
\end{equation}
where $h(\phi_v)$ is equal to the value of $h^{\deg(v)}$ at $e_{\phi_v}:=e_{\phi_v(1)}\otimes \cdots \otimes e_{\phi_v(\deg(v))}$. (Here, $e_1,\ldots, e_{2\ell}$ is the standard basis of $\C^{2\ell}$.)

The name skew-partition function is motivated by the fact that if $G'$ is obtained from $G$ by flipping the direction of an arc of $G$ then $s_h(G')=-s_h(G)$, as we will see later, but which is also not difficult to show directly.

We can view a triple $(G,\omega,\kappa)\in \G^\#$, of which no component of $G$ is a circle, as a locally ordered directed graph $G'$.
In terms of the skew-partition we have for $h\in \bigwedge(V^*_{2 \ell})$ that $s_h((G,\omega,\kappa))=(-1)^{c(G,\kappa)}s_h(G')$. 
\begin{rem}
While we have defined skew-partition functions for arbitrary tensors, in the remainder of this section we will just focus on skew-symmetric tensors. The reason being that we want to give a characterization of skew-partition functions of skew-symmetric tensors, which we can use to proof Theorem \ref{thm:main}.
We remark that is possible to give a more general treatment in which one can considers edge-coloring models $(h^n)_{n\in I}$ for some subset $I\subseteq\N$, where each $h^n$ is a tensor that is invariant under some action of some subgroup of $S_n$, but for the sake of concreteness we have chosen to just stick to skew-symmetric tensors.
\end{rem}

\subsection{Invariants of the symplectic group}
Let $\langle\cdot,\cdot \rangle$ be the nondegenerate skew-symmetric bilinear form on $V_{2 \ell}$ given by $\la x,y\ra=x^TJy$ for $x,y\in V_{2\ell}$,  where $J$ is the $2\ell\times 2\ell$ matrix 
 $\left (\begin{array}{rr} 0&I\\-I&0\end{array}\right)$, with $I$ the $\ell\times \ell$ identity matrix.
Note that $\la\cdot,\cdot\ra$ naturally induces a nondegenerate symmetric bilinear form on $(V_{2 \ell})^{\otimes 2m}$ for any $m$, which again will be denoted by $\la\cdot,\cdot\ra$.

Recall that $e_1,\ldots,e_{2\ell}$ denotes the standard basis for $V_{2 \ell}$. 
Let $f_1,\ldots,f_{2\ell}\in V_{2\ell}$ be the associated dual basis with respect to the skew-symmetric form, i.e., $\la f_i,e_j\ra =\delta_{i,j}$ for all $i,j=1,\ldots, 2\ell$.
Then $f_i$ is defined by \eqref{eq:dual}.

Let $x_1,\ldots,x_{2\ell}\in V^*_{2\ell}$ be defined by $x_i(e_j)=\delta_{i,j}$.
A basis for $\bigwedge^n(V^*_{2 \ell})$ is then given by $\{x_S\mid S\subseteq [2\ell], |S|=n\}$, where for $S=\{s_1,\ldots,s_n\}$, with $s_1<s_2<\cdots<s_n$,
\[
x_S:=\sum_{\pi \in S_n}\text{sgn}(\pi) x_{s_{\pi(1)}}\otimes \cdots \otimes x_{s_{\pi(n)}}.
\]
Let $R$ be the ring of functions generated by $(\bigwedge(V_{2 \ell}^*))^*$. 
Let, for $S\subset [2\ell]$, $y_S\in (\bigwedge(V_{2 \ell}^*))^*$ be defined by $y_S(x_{S'})=\delta_{S,S'}$ for $S'\subset [2\ell]$.
Then $R$ can be identified with the ring of polynomials in the variables $y_S$ with $S\subseteq [2\ell]$. 

The \emph{symplectic group} $\text{Sp}_{2 \ell}$ is the group of $2\ell\times 2\ell$ matrices that preserve the skew-symmetric form; i.e., for $g\in \C^{2\ell\times 2\ell}$, $g\in\text{Sp}_{2 \ell}$ if and only if $\la gx,gy \ra=\la x,y\ra$ for all $x,y\in V_{2 \ell}$.
Note that the symplectic group has a natural action on $V_{2\ell}^*$: for $g\in \text{Sp}_{2 \ell}$, $x\in V_{2\ell}^*$ and $v\in V_{2 \ell}$, $(gx)(v):=x(g^{-1}v)$. This clearly extends to an action on $\bigwedge(V_{2\ell}^*)$.
The symplectic group also has a natural action on $R$: for $g\in \text{Sp}_{2 \ell}$, $p\in R$ and $x\in \bigwedge(V^*_{2 \ell})$, $(gp)(x):=p(g^{-1}x)$.

For a set $X$, we denote by $\C X$ the vector space of finite formal $\C$-linear combinations of elements of $X$.
We will define a linear map $p:\C\D\to R$ that will turn out to have as image the space of $\text{Sp}_{2\ell}$-invariant polynomials.
To do so, we need a definition.
For an injective map $\phi:[k]\to [2\ell]$, we define the \emph{sign} of $\phi$ to be the sign of the permutation $\pi\in S_k$ such that, for $\psi=\phi\circ \pi$, we have $\psi(1)<\psi(2)<\cdots<\psi(k)$. This is denoted by $\text{sgn}(\phi)$.
For $\phi:[k]\to [2\ell]$, we define elements of $R$ by
\begin{equation}
z_{\phi}:=\left \{\begin{array}{l}\text{sgn}(\phi)y_{\phi([k])} \text{ if $\phi$ is injective,}
\\ 0\text{ otherwise.} \end{array}\right.
\label{eq:z variable} 
\end{equation}
We now define the linear map $p:\C\D\to R$ by 
\begin{equation}
G=(V,A,\kappa)\mapsto \sum_{\phi:A\to [2\ell]}(-1)^{o(\phi)}\prod_{v\in V}z_{\phi_v}
\end{equation}
for $G\in\D$.
Note that, if we evaluate $p(G)$ at $h\in \bigwedge(V^*_{2 \ell})$, we just get the skew-partition function of $h$ as defined in \eqref{eq:part skew}, since $z_{\phi_v}(h)=h(\phi_v)$. 
So $p(G)(h)=s_h(G)$.

To describe the kernel of the map $p$ we need some definitions.
For $G=(V,A,\kappa)\in \D$ and $U\subseteq A$ of size $\ell+1$, we identify $U$ with $[\ell+1]$. For $\pi \in S_{2\ell+2}$, let $A_\pi$ be the set of arcs obtained from $A$ by first removing the arcs in $U$ from $A$ and secondly by labeling for $i\in [\ell+1]$ the starting vertex of arc $i$ with $i$ and its terminating vertex with $\ell+1+i$ (some vertices may obtain multiple labels this way). 
Finally, we add $(\pi(i),\pi(\ell+1+i))$ to $A\setminus U$ for each arc $i\in[\ell+1]$.
Let $G_{U,\pi}=(V,A_\pi,\kappa)$. Let $\mathcal{J}_\ell \subset \C\D$ be the subspace spanned by 
\begin{equation}
\{\sum_{\pi \in S_{2\ell+2}}\text{sgn}(\pi)G_{U,\pi}\mid G=(V,A,\kappa)\in \D,U\subseteq A, |U|=\ell+1 \}.\label{eq:Im}
\end{equation}
Note that while $G_{U,\pi}$ may depend on the choice of identification of $U$ with $[\ell+1]$, the sum $\sum_{\pi \in S_{2\ell+2}}\text{sgn}(\pi)G_{U,\pi}$ does not. So the space $\mathcal{J}_\ell$ is well defined.

Call $G'$ a \emph{negative flip} of $G$ if $G'$ is obtained from $G$ by flipping the direction of an odd number of its arcs. 
The group $\prod_{v\in V} S_{\deg(v)}$ acts on locally ordered directed graphs with the degree sequence $(\deg(v))_{v\in V}$ by setting $\pi \cdot (V,A,\kappa)=(V,A,\kappa\circ \pi^{-1})$ for $\pi\in \prod_{v\in V} S_{\deg(v)}$ and $G=(V,A,\kappa)\in \D$.
Call $G'$ a \emph{negative permutation} of $G$ if $G'=\pi\cdot G$ for an odd permutation $\pi$.
Let $\mathcal I\subset \C\D$ be the subspace spanned by 
\[
\{G+G',G+G''\mid G\in \D, G' \text{ a negative flip of }G,G'' \text{ a negative permutation of } G\}
\]
Let finally
\begin{equation}
\mathcal{I}_\ell=\mathcal{J}_\ell+\mathcal I.\label{eq:ker p}
\end{equation}

\begin{proposition}\label{prop:im and ker p}
The image of $p$ is equal to $R^{\text{Sp}_{2 \ell}}$, the space of $\text{Sp}_{2 \ell}$-invariant polynomials in $R$, and the kernel of $p$ is equal to $\mathcal{I}_\ell$.
\end{proposition}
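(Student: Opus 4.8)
The plan is to handle the image and the kernel in turn, using the First Fundamental Theorem (FFT) for the symplectic group and the Nullstellensatz. For the image, first note that $p(G)(h) = s_h(G)$ and that for $g \in \text{Sp}_{2\ell}$ one has $s_{g^{-1}h}(G) = s_h(G)$, because the skew-partition function is built by contracting copies of $h$ along the skew-symmetric form $\la\cdot,\cdot\ra$ — which $g$ preserves — so $p(G) \in R^{\text{Sp}_{2\ell}}$ for every $G \in \D$, giving $\text{im}(p) \subseteq R^{\text{Sp}_{2\ell}}$. For the reverse inclusion one invokes the FFT: a spanning set for $R^{\text{Sp}_{2\ell}}$, restricted to the relevant multilinear pieces, consists of products of the form obtained by pairing up tensor slots and contracting through $J$; each such product is manifestly the image under $p$ of an appropriate locally ordered directed graph, where the pairing dictates the arcs and the local orderings $\kappa_v$ record which slot of which copy of $h$ is being used. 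The sign bookkeeping in \eqref{eq:z variable} (the $\text{sgn}(\phi)$ factor) and the $(-1)^{o(\phi)}$ factor are exactly what is needed to match the definition of $\la\cdot,\cdot\ra$ via the vectors $f_i$ from \eqref{eq:dual}. So $R^{\text{Sp}_{2\ell}} \subseteq \text{im}(p)$, and the two are equal.

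For the kernel, one inclusion is direct: I would check that each generator of $\mathcal{I}_\ell$ lies in $\ker p$. For the generators of $\mathcal{I}$ this is a sign computation — flipping one arc multiplies $s_h$ by $-1$ (as already noted in the text), and a transposition in some $\kappa_v$ multiplies $h(\phi_v)$ by $-1$ by skew-symmetry of $h$ — so $G + G' \in \ker p$ in both cases. For the generators of $\mathcal{J}_\ell$, the key point is the classical fact that skew-symmetrizing over $2\ell+2$ indices taking values in a $2\ell$-dimensional space gives zero; concretely, $\sum_{\pi \in S_{2\ell+2}} \text{sgn}(\pi)\, p(G_{U,\pi})$ evaluates, after expanding, to an alternating sum over $S_{2\ell+2}$ of terms indexed by colorings, and since only $2\ell$ colors are available this alternating sum vanishes identically (this is the ``negative dimensional'' relation, the algebraic heart of the whole construction). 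Hence $\mathcal{I}_\ell \subseteq \ker p$.

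The reverse inclusion $\ker p \subseteq \mathcal{I}_\ell$ is the main obstacle, and I would approach it by mimicking the argument in \cite{DGLRS12}. The strategy: pass to the quotient $\C\D / \mathcal{I}_\ell$ and show that $p$ descends to an \emph{injective} map on it. One constructs a section or a pairing — for each degree sequence, the relevant piece of $\C\D$ modulo $\mathcal{I}_\ell$ should be identified with a quotient of a tensor space by the symplectic-trace relations, and the FFT/SFT (Second Fundamental Theorem) for $\text{Sp}_{2\ell}$ says precisely that these relations — which are exactly images of the $\mathcal{J}_\ell$-generators together with the symmetry relations $\mathcal{I}$ — generate the full ideal of relations among the invariants. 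Thus any element of $\ker p$, being a relation among the $p(G)$'s, must already be a consequence of the defining relations, i.e. lie in $\mathcal{I}_\ell$. The technical work is in setting up this dictionary carefully: matching graphs with ``diagrams'', tracking signs through the identifications, and ensuring that the SFT relations (Pfaffian-type vanishing) correspond exactly to the $S_{2\ell+2}$-antisymmetrization generators \eqref{eq:Im} and not something strictly larger. I expect the sign conventions and the reduction to multilinear (degree-homogeneous) pieces to be the most delicate part.
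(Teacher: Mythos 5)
Your plan is structurally the same as the paper's: establish the image via the First Fundamental Theorem for $\text{Sp}_{2\ell}$, check the easy inclusion $\mathcal{I}_\ell\subseteq\ker p$ by direct sign computations and the vanishing of antisymmetrizations over $2\ell+2$ slots in a $2\ell$-dimensional space, and then get $\ker p\subseteq\mathcal{I}_\ell$ from a Second Fundamental Theorem after factoring $p$ through a tensor space indexed by perfect matchings. The paper does exactly this, organized around the commutative square involving the map $\tau\colon\C\oM_{2m}\to V_{2\ell}^{\otimes 2m}$ and the graph-building map $\mu_D$, restricted to a fixed degree sequence so that everything is multilinear.

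Where your proposal has a genuine gap is in treating ``the SFT for $\text{Sp}_{2\ell}$'' as an available black box. The version of the SFT you actually need concerns \emph{directed} perfect matchings with sign conventions coming from the skew form, and the paper notes explicitly that this variant had not appeared in the literature; it is Lemma~\ref{lem:ker tau}, and establishing it is the real technical content of the proof. Concretely, the paper first shows that $\la\cdot,\cdot\ra$ is nondegenerate on $\text{im}(\tau)$ (via the Reynolds operator), so that $\ker\tau$ equals the kernel of the Gram matrix $B_{M,N}=(2\ell)^{c(M\cup N)}(-1)^{o(M\cup N)+m}$; it then relates $B$ to the classical Brauer-algebra matrix $A_{M,N}=(-2\ell)^{c(M\cup N)}$ by a sign-of-matching identity, and invokes the Hanlon--Wales description of $\ker A$. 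One also needs $S_n$-equivariance of $\tau$ and an $S_n$-averaging step to lift from labeled matchings to unlabeled graphs. None of this is routine sign bookkeeping. If you want to carry out your plan, you must either prove this directed SFT yourself or find it in the literature; you cannot deduce it by simply citing the standard (undirected) SFT. Your instinct that ``the sign conventions ... will be the most delicate part'' is pointing at the right place, but the difficulty is not merely delicacy — it is that a new lemma is required, and the route to it (Gram matrix nondegeneracy plus Hanlon--Wales) is not something your outline contains.

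Two smaller points. First, your argument for $\mathcal{J}_\ell\subseteq\ker p$ is correct in spirit but should be routed through the tensor picture: it is the element $v_0=\sum_{\rho\in S_{2\ell+2}}\text{sgn}(\rho)\rho M_0$ that visibly maps to zero under $\tau$ (antisymmetrizing $2\ell+2$ slots of $V_{2\ell}^{\otimes 2m}$), and then $\mu_D$ carries it to a $\mathcal{J}_\ell$-generator; stated purely at the graph level the claim is less transparent. Second, for the image, your phrase ``manifestly the image under $p$ of an appropriate graph'' is exactly what the map $\mu_D$ formalizes; you should make that construction explicit so that the FFT statement $\tau(\C\oM_{2m})=(V_{2\ell}^{\otimes 2m})^{\text{Sp}_{2\ell}}$ transfers cleanly through $\sigma_D$ to $R_D^{\text{Sp}_{2\ell}}$.
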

We postpone the proof of this proposition to the next section. First we will utilize it to characterize which invariants of locally ordered directed graph are skew-partition functions of skew-symmetric tensors.

\begin{theorem}\label{thm:char od}
An invariant of locally ordered directed graphs $f:\D\to \C$ is the partition function of a skew-symmetric tensor $h\in \bigwedge(V^*_{2 \ell})$ if and only if $f$ is multiplicative and the linear extension of $f$ to $\C\D$ is such that $f(\mathcal{I}_\ell)=0$.
\end{theorem}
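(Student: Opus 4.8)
The plan is to deduce Theorem~\ref{thm:char od} from Proposition~\ref{prop:im and ker p} together with the reductivity of the symplectic group. Recall that $p:\C\D\to R$ is linear with $\ker p=\I_\ell$ and image $R^{\text{Sp}_{2\ell}}$, that $p(\emptyset)=1$ and $p(G_1\cup G_2)=p(G_1)p(G_2)$ (disjoint union corresponds to multiplication in $R$), and that for $h\in\bigwedge(V^*_{2\ell})$ one has $p(G)(h)=s_h(G)$; in other words $s_h=\ev_h\circ p$, where $\ev_h:R\to\C$ is the unital $\C$-algebra homomorphism ``evaluation at $h$''. For the forward direction, if $f=s_h$ then multiplicativity and $f(\emptyset)=1$ are immediate from the defining formula \eqref{eq:part skew} (both the sum over colorings and the product over vertices split over a disjoint union), and since $f=\ev_h\circ p$ on all of $\C\D$ we get $f(\I_\ell)=\ev_h\bigl(p(\ker p)\bigr)=0$.

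For the converse, assume $f:\D\to\C$ is multiplicative and its linear extension kills $\I_\ell$. Since $\ker p=\I_\ell\subseteq\ker f$, the functional $f$ factors through $p$: there is a linear map $\tilde f:R^{\text{Sp}_{2\ell}}\to\C$ with $f=\tilde f\circ p$. The set $\{p(G):G\in\D\}$ spans $R^{\text{Sp}_{2\ell}}$ and is closed under multiplication, so multiplicativity of $f$ (with $f(\emptyset)=1$) translates precisely into $\tilde f$ being a unital $\C$-algebra homomorphism on $R^{\text{Sp}_{2\ell}}$. It therefore suffices to produce $h\in\bigwedge(V^*_{2\ell})$ with $\tilde f=\ev_h|_{R^{\text{Sp}_{2\ell}}}$: then $f(G)=\tilde f(p(G))=p(G)(h)=s_h(G)$ for every $G$, and since any element of $\bigwedge(V^*_{2\ell})$ is a skew-symmetric tensor, this finishes the proof.

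To find such an $h$, let $\mathfrak m=\ker\tilde f$, a maximal ideal of $R^{\text{Sp}_{2\ell}}$ with $R^{\text{Sp}_{2\ell}}=\C\oplus\mathfrak m$ as vector spaces. Since $\text{Sp}_{2\ell}$ is reductive, there is a Reynolds operator $R\to R^{\text{Sp}_{2\ell}}$, i.e.\ an $R^{\text{Sp}_{2\ell}}$-module projection, which forces $\mathfrak m R\cap R^{\text{Sp}_{2\ell}}=\mathfrak m\neq R^{\text{Sp}_{2\ell}}$; hence $\mathfrak m R$ is a proper ideal of the finitely generated algebra $R\cong\C[y_S:S\subseteq[2\ell]]$. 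By the Nullstellensatz $\mathfrak m R$ is contained in a maximal ideal of $R$, which (as $\C$ is algebraically closed) is the vanishing ideal of some point $h\in\bigwedge(V^*_{2\ell})$. Then $\ker(\ev_h|_{R^{\text{Sp}_{2\ell}}})\supseteq\mathfrak m$, so it equals $\mathfrak m$ by maximality, and two unital algebra homomorphisms $R^{\text{Sp}_{2\ell}}\to\C$ with the same kernel agree on $\C\oplus\mathfrak m$; thus $\tilde f=\ev_h|_{R^{\text{Sp}_{2\ell}}}$, as desired.

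The only genuinely non-formal ingredient is the last step, namely that the algebra homomorphism $\tilde f$ defined on the invariant ring $R^{\text{Sp}_{2\ell}}$ extends to (equivalently, is induced by a point of) the full ring $R$; this is where reductivity of $\text{Sp}_{2\ell}$ enters, via the Reynolds operator guaranteeing that $\mathfrak m R$ stays proper. I expect this to be the main obstacle, and it mirrors the corresponding argument in \cite{DGLRS12}; everything else is bookkeeping with Proposition~\ref{prop:im and ker p}.
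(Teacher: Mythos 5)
Your proof is correct and follows essentially the same route as the paper: use Proposition~\ref{prop:im and ker p} and multiplicativity to factor $f$ through $p$ as a unital algebra homomorphism $\tilde f:R^{\text{Sp}_{2\ell}}\to\C$, then combine the Reynolds operator (reductivity of $\text{Sp}_{2\ell}$) with Hilbert's Nullstellensatz to realize $\tilde f$ as evaluation at some $h\in\bigwedge(V^*_{2\ell})$. The paper packages this as a contradiction argument --- writing $1=\sum_i r_i\bigl(f(G_i)-p(G_i)\bigr)$, projecting the $r_i$ into $\mathrm{im}\,p$ via the Reynolds operator, and applying $\hat f$ to get $1=0$ --- whereas you argue forward via properness of $\mathfrak m R$ and a maximal ideal lying over $\mathfrak m$; these are two standard phrasings of the same Szegedy-style argument.
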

\begin{proof}
Skew-partition functions are clearly multiplicative. 
By Proposition \ref{prop:im and ker p}, we have that for any $h\in \bigwedge(V^*_{2 \ell})$ that $s_h(\mathcal{I}_\ell)=0$, proving the `only if' part.

The proof of the `if' direction is based on a beautiful, and by now, well-known idea of Szegedy \cite{Sz7}; see also \cite{DGLRS12}.
We will sketch the proof.
The idea is to use the Nullstellensatz to find a solution $h\in \bigwedge(V^*_{2 \ell})$ to the set of equations $f(G)=p(G)(h)$, with $G\in \D$.
Since $f$ is multiplicative and maps $\mathcal{I}_\ell$, the kernel of $p$, to zero, there is a unique algebra homomorphism $\hat f:R^{\text{Sp}_{2 \ell}}\to \C$ such that $f=\hat f\circ p$.
If there is no solution $h\in \bigwedge(V^*_{2 \ell})$ to the set of equations $f(G)=p(G)(h)$ with $G\in \D$, then by Hilbert's Nullstellensatz, $1$ is contained in the ideal generated by $f(G)-p(G)$.
In other words, there exist locally ordered directed graphs $G_1,\ldots,G_n$ and $r_1,\ldots,r_n\in R$ such that 
\begin{equation}\label{eq:1 ideal}
1=\sum_{i=1}^n r_i(f(G_i)-p(G_i)).
\end{equation}
As the image of $p$ is equal to $R^{\text{Sp}_{2 \ell}}$, applying the Reynolds operator of the symplectic group (i.e., the projection $R\mapsto R^{\text{Sp}_{2 \ell}}$ onto the space of the $\text{Sp}_{2 \ell}$-invariants) to both sides of \eqref{eq:1 ideal}, we may assume that each $r_i$ is equal to $p(\eta_i)$ for some linear combination of locally ordered directed graphs $\eta_i$.
Now applying $\hat f$ to both sides of \eqref{eq:1 ideal} we obtain
\[
1=\sum_{i=1}^n \hat f(p(\eta_i))(f(G_i)-\hat f(p(G_i)))=\sum_{i=1}^n \hat f(p(\eta_i))(f(G_i)-f(G_i))=0,
\]
a contradiction. This finishes the proof.
\end{proof}
\begin{rem}
We have not used the circle $\bigcirc$ in our characterization, but if we want to allow the circle $\bigcirc$ as a locally ordered directed graph, then, for a $2\ell$-color edge-coloring model $h$, we should define $s_h(\bigcirc)=-2\ell$ to be compatible with our characterization.
Following up on this, in \cite{S15a} Schrijver characterizes weight systems $f$ coming from a representation of a Lie algebra (i.e., functions on certain $4$-regular locally ordered directed graphs with two incoming and two outgoing arcs at each vertex that satisfy a certain relation) in terms of rank growth of associated connection matrices. 
We note here that the condition that $f(\bigcirc)\geq 0$ should be added to statement (iv) of his theorem, since skew-partition functions can give examples of weight systems that satisfy the rank growth, but have $f(\bigcirc)< 0$.
\end{rem}

\subsection{Proof of Proposition \ref{prop:im and ker p}}
To prove Proposition \ref{prop:im and ker p} we will make use of the first fundamental theorem of invariant theory for the symplectic group and a result of Hanlon and Wales \cite{HW89}.

For a graph $G=(V,A,\kappa)\in \D$, let $n:=|V|$, identify $V$ with $[n]$ and let $D=(D_1,\ldots,D_n)$ be its \emph{degree sequence}; i.e., $D_i=\deg(i)$.
For a degree sequence $D$, we let $\D_D$ be the set of graphs with degree sequence $D$, and we let $R_D$ be the space of polynomials where each monomial is equal to $\prod_{i=1}^n y_{S_i}$, for some subsets $S_i\subseteq [2\ell]$ with $|S_i|=D_i$.
Let $2m:=\sum_{i=1}^nD_i$.
Write $p_D$ for the restriction of $p$ to $\C\D_D$.
To prove Proposition \ref{prop:im and ker p} it suffices to show that 
\begin{equation}
\text{im } p_D=(R_D)^{\text{Sp}_{2 \ell}} \text{ and }\ker p_D=\C\D_D\cap \mathcal{I}_\ell.\label{eq:restrict D}
\end{equation}

To show \eqref{eq:restrict D}, let $\oM_{2m}$ denote the collection of directed perfect matchings on $[2m]$.
We will next define maps $\tau, \sigma_D$ and $\mu_D$ so as to make the following diagram commute:
\begin{equation}\label{eq:diagram}
\begin{tikzpicture}[scale=2.5]
\node (A) at (0,.75) {$\C\D_D$};
\node (B) at (1,.75) {$R_D$};
\node (C) at (0,0) {$\C\oM_{2m}$};
\node (D) at (1,0) {$V_{2 \ell}^{\otimes 2m}$.};

\draw [->]
(C) edge node[left]{$\mu_D$} (A)
(C) edge node[above]{$\tau$} (D)
(D) edge node[right]{$\sigma_D \quad $} (B)
(A) edge node[above]{$p_D$} (B);
\end{tikzpicture}
\end{equation}
Let $\tau:\C{\oM}_m\to V_{2 \ell}^{\otimes 2m}$ be the unique linear map defined by
\begin{equation}\label{eq:matching to tensor}
M\mapsto \sum_{\substack{\phi:[2m]\to [2\ell]\\ |\phi(e)|=1 \text{ for all }e\in E(M)}} \bigotimes_{j=1}^{2m} a^M_{\phi,j}
\end{equation} 
for $M\in {\oM}_m$, where $a^M_{\phi,j}$ is equal to $e_{\phi(j)}$ if $j$ is the tail of an arc of $M$, and equal to $f_{\phi(j)}$ if $j$ is the head of an arc of $M$.
Note that since the tensor $\sum_{i=1}^{2\ell} e_i\otimes f_i$ is skew-symmetric, it follows that\begin{equation}
\text{if $M'$ is obtained from $M$ by flipping $c$ of its arcs, then $\tau(M')=(-1)^c\tau(M)$.}\label{eq:arc flip}
\end{equation}

To define the map $\mu_D$, let $\mathcal P=\{P_1,\ldots, P_n\}$ be the partition of the set $[2m]$ where $P_1:=\{1,\ldots,D_1\}$, $P_2:=\{D_1+1,\ldots,D_1+D_2\}$, etc.
Then $\mu_D$ is the unique linear map defined by sending a directed matching $M$ to the graph $G$ obtained from $M$ by identifying the vertices in each class of $\mathcal P$.
For each vertex $v$ of $G$, the bijection $\kappa_v:[\deg(v)]\to \delta(v)$ is obtained from the (natural) ordering of $[2m]$.

The map $\sigma_D$ is the unique linear map defined by sending $e_{\phi(1)}\otimes \cdots \otimes e_{\phi(2m)}$, for $\phi:[2m]\to [2\ell]$, to the monomial $\prod_{i=1}^n z_{\phi_i}$, where $\phi_i:[D_i]\to [2\ell]$ is defined as $\phi_i(j):=\phi(j+D_1+\dots+D_{i-1})$, and where $z_{\phi_i}$ is defined in \eqref{eq:z variable}.

Writing out the definition of $\sigma_D(\tau(M))$ for $M\in \oM_{2m}$, it is clear that it is equal to $p_D(\mu_D(M))$. 
So the diagram \eqref{eq:diagram} commutes. 
In particular, since $\mu_D$ is surjective, it implies by \eqref{eq:arc flip} that if $G'$ is obtained from $G$ by flipping the direction of an arc, then $p(G)=-p(G')$.

The symmetric group $S_{2m}$ has a natural action $V_{2 \ell}^{\otimes 2m}$. It also acts naturally on $\oM_{2m}$; for $\pi\in S_{2m}$ and $M\in \oM_{2m}$ we define $\pi M\in \oM_{2m}$ to be the directed matching with arcs $(\pi(i),\pi(j))$ for $(i,j)\in E(M)$.
Let us denote by $S_n\subseteq S_{2m}$ the subgroup that permutes the sets $P_1,\ldots, P_n$, but maintains the relative order of the $P_i$.
Then $S_n$, as a subgroup of $S_{2m}$, has a natural action on $\C\oM_{2m}$ and $V^{\otimes 2m}_{2\ell}$.

Let $S_D:=\prod_{i=1}^n S_{P_i}\subseteq S_{2m}$. 
We let $S_D$ act on $\C\oM_{2m}$ and $V^{\otimes 2m}_{2\ell}$ with the sign representation; i.e., for $\pi\in S_D$, $M\in \C\oM_{2m}$, and $v\in V^{\otimes 2m}_{2\ell}$, we set $\pi\cdot M:=\text{sgn}(\pi)\pi M$ and $\pi\cdot v:=\text{sgn}(\pi)\pi v$.
Then $\tau$ is $S_n$ and $S_D$-equivariant.

By the first fundamental theorem for the symplectic group, cf. \cite[Section 5.3.2]{GW09}, we have that
\begin{equation}
 \tau(\C{\oM}_m)=(V_{2 \ell}^{\otimes 2m})^{\text{Sp}_{2 \ell}}.	\label{eq:FFT}
\end{equation}
Let now $q\in R_D^{\text{Sp}_{2 \ell}}$. 
Then, as $\sigma_D$ is surjective, there exists $v\in V_{2 \ell}^{\otimes 2m}$ such that $\sigma_D(v)=q$.
Since $\sigma_D$ is $\text{Sp}_{2 \ell}$-equivariant (it is even $\text{GL}_{2\ell}$-equivariant), we may assume that $v$ is $\text{Sp}_{2 \ell}$-invariant.
So, by \eqref{eq:FFT}, there exists $M \in \C{\oM}_{2m}$ such that $\tau_D(M)=v$. 
Then, by the commutativity of \eqref{eq:diagram}, it follows that $p_D(\mu_D(M))=q$, showing $`\supseteq'$ of the first part of \eqref{eq:restrict D}. 
As, by \eqref{eq:FFT}, the commutativity of \eqref{eq:diagram} and the fact $\sigma_D$ is $\text{Sp}_{2 \ell}$-equivariant, the inclusion $\subseteq$ is also clear, we have the first part of \eqref{eq:restrict D}.

Suppose now that $p_D(\gamma)=0$ for some $\gamma\in \C\D_D$.
Then, since $\mu_D$ is surjective, we know there exists $M\in \C\oM_{2m}$ such that $\mu_D(M)=\gamma$.
Since $\gamma$ is invariant under the action of $S_n$ (as the vertex sets of the graphs in $\D_D$ are unlabeled), we may assume that $M$ is $S_n$-invariant.
Let $v:=\tau(M)$. 
Then, as $\tau$ is $S_n$-equivariant, $v$ is $S_n$-invariant. 
By the commutativity of \eqref{eq:diagram}, we have that $\sigma_D(v)=0$.
This implies that $v$ is contained in the space spanned by 
\[
\{u-\pi\cdot u\mid u\in V_{2 \ell}^{\otimes 2m}, \pi\in S_D\}.
\]
As $\C\oM_{2m}/\ker \tau \cong \tau(\C \oM_{2m})$, and $\tau$ is $S_D$-equivariant, we find that $M=M_1+M_2$ with $M_2\in \ker \tau$ and $M_1$ contained in the space spanned by 
\[
\{u-\pi\cdot u\mid u\in \C\oM_{2m}, \pi\in S_D\}.
\]
Clearly, $\mu_D(M_1)$ is contained in $\mathcal{I}_\ell$. We will now describe the kernel of $\tau$ to show that $\mu_D(M_2)$ is contained in $\mathcal{I}_\ell$ as well.

Consider two directed matchings $M,N$. Then their union is a collection of cycles (if two edges of the matchings coincide, we see this as a cycle of length two). Fix for each cycle an orientation and call an arc $a\in E(M)\cup E(N)$ \emph{odd} if it is traversed in the opposite direction. 
The number of odd arcs in $E(M)\cup E(N)$ is denoted by $o(M\cup N)$. 
Note that the parity of the number of odd arcs is independent of the choice of orientation of the cycles, as all cycles have even length.

Let us fix $M_0$ to be the directed matching with arcs $(1,2),(3,4),\ldots,(2m-1,2m)$
and define 
\begin{equation}
v_0:=\sum_{\rho\in S_{2\ell+2}}\text{sgn}(\rho)\rho M_0,	\label{eq:ker generator}
\end{equation}
where we consider $S_{2\ell+2}$ as a subgroup of $S_{2m}$ acting on $[2\ell+2]\subseteq [2m]$.
We call a matching $N$ a \emph{negative flip} of a matching $M$ if it is obtained from $M$ by flipping the directions of an odd number of arcs of $M$. 
\begin{lemma}\label{lem:ker tau}
The kernel of the map $\tau$ is spanned by 
\[
\{\pi v_0, M+M' \mid \pi \in S_{2m}, M\in \oM_{2m},\- M'\text{ a negative flip of }M\}.
\]
\end{lemma}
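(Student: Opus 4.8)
The plan is to prove both inclusions: the right-hand span is contained in $\ker\tau$, and $\ker\tau$ is contained in that span, the latter being by far the harder direction. For the easy inclusion, note that $M+M'\in\ker\tau$ for a negative flip $M'$ of $M$ is immediate from \eqref{eq:arc flip}. For $\pi v_0$, it suffices by $S_{2m}$-equivariance of $\tau$ to show $\tau(v_0)=0$: applying $\tau$ term by term, $\tau(v_0)=\sum_{\rho\in S_{2\ell+2}}\mathrm{sgn}(\rho)\,\rho\,\tau(M_0)$, and since the first $2\ell+2$ coordinates of $\tau(M_0)$ involve only the tensor $\sum_i e_i\otimes f_i$ on the arcs $(1,2),(3,4),\ldots,(2\ell+1,2\ell+2)$, antisymmetrizing a tensor in $V_{2\ell}^{\otimes(2\ell+2)}$ over $S_{2\ell+2}$ yields a skew-symmetric $(2\ell+2)$-tensor on a $2\ell$-dimensional space, which is zero since $2\ell+2>2\ell$. (One should check the antisymmetrization really lands in $\Lambda^{2\ell+2}(V_{2\ell})$; this comes down to the fact that $\sum_i e_i\otimes f_i=\sum_i e_i\otimes f_i$ pushed through $S_{2\ell+2}$ produces exactly the antisymmetrizer applied to a fixed tensor, up to the matching data on the remaining coordinates.)

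For the hard inclusion, the strategy is dimension counting, exactly paralleling the Hanlon--Wales-type argument referenced before the lemma. Let $W$ denote the span of $\{\pi v_0, M+M'\}$; we have shown $W\subseteq\ker\tau$, so it suffices to show $\dim\tau(\C\oM_{2m})\ge\dim(\C\oM_{2m}/W)$, equivalently $\dim(\C\oM_{2m})-\dim W\le\dim\tau(\C\oM_{2m})$. By the first fundamental theorem \eqref{eq:FFT}, $\tau(\C\oM_{2m})=(V_{2\ell}^{\otimes 2m})^{\mathrm{Sp}_{2\ell}}$, whose dimension is known (it counts perfect matchings on $[2m]$ with a correction, or equivalently is given by a Brauer-algebra/Hanlon--Wales formula; for $2\ell$ large it is $(2m-1)!!$, and for general $2\ell$ it drops by a controlled amount governed by partitions with more than $\ell$ rows or columns). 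First I would pass from directed matchings to undirected ones: modding out by the negative-flip relations identifies $\C\oM_{2m}/\langle M+M'\rangle$ with the sign-twisted space $\C\M_{2m}$ on undirected matchings (dimension $(2m-1)!!$), and under this quotient $v_0$ maps to the standard Brauer-algebra ``antisymmetrizer'' element $\sum_{\rho\in S_{2\ell+2}}\mathrm{sgn}(\rho)\rho$ applied to the base matching. Then the claim becomes the statement that the span of the $S_{2m}$-translates of this antisymmetrizer element, inside the Brauer algebra (or its matching-space module), is exactly the kernel of the map to $\mathrm{End}$-space at parameter $-2\ell$ — which is precisely the content of the result of Hanlon and Wales on the structure of Brauer algebras at negative even integer parameters.

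Concretely, I would invoke \cite{HW89}: the Brauer algebra $B_{2m}(\delta)$ at $\delta=-2\ell$ acting on $V_{2\ell}^{\otimes 2m}$ has kernel generated, as a two-sided ideal, by the single idempotent-like element corresponding to antisymmetrizing $2\ell+2$ strands (this is the ``$e_{2\ell+2}$''-type relation: a skew-symmetrizer on $2\ell+2$ points must vanish because $\Lambda^{2\ell+2}(V_{2\ell})=0$, and Hanlon--Wales show this is the only relation). Translating the two-sided ideal statement into the matching-module language gives exactly: $\ker\tau$ (after untwisting the flips) is spanned by the $S_{2m}$-orbit of $v_0$ together with the flip relations. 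I would then carefully match conventions — the $\mathrm{sgn}(\rho)$ twist in \eqref{eq:ker generator} versus the usual Brauer-algebra presentation, and the fact that we are using the skew (sign) action rather than the trivial one — to land the result in the precise form stated.

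The main obstacle is the bookkeeping connecting our directed-matching/sign-representation setup to the standard Brauer-algebra formulation in Hanlon--Wales, and in particular verifying that the element $v_0$ of \eqref{eq:ker generator} corresponds exactly to the generator of $\ker\tau$ in their theorem (signs, the role of the negative-flip relations, and making sure no extra generators are needed). The pure linear algebra — showing $W\subseteq\ker\tau$ and that dimensions match — is routine once the correspondence is set up; it is this translation, and citing \cite{HW89} in exactly the right form, that requires care.
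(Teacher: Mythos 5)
Your high-level strategy — establish $W\subseteq\ker\tau$ directly and then reduce the spanning statement to Hanlon--Wales for the Brauer Gram matrix at parameter $-2\ell$, untwisting the directed/sign setup — is in fact the route the paper takes, and your argument for the easy inclusion (that $\tau(v_0)$ antisymmetrizes $2\ell+2$ tensor slots inside $V_{2\ell}^{\otimes 2m}$ and hence vanishes because $\Lambda^{2\ell+2}(V_{2\ell})=0$) is correct, though the paper gets that inclusion for free from the exactness of the Hanlon--Wales description. However, there is a genuine gap exactly where you flag ``bookkeeping that requires care'': you never actually carry out the sign translation that makes the reduction work. The paper's proof turns on two concrete steps that your sketch does not supply. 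First, it computes the Gram matrix of the $\tau(M)$'s, finding $\langle\tau(M),\tau(N)\rangle=B_{M,N}=(2\ell)^{c(M\cup N)}(-1)^{o(M\cup N)+m}$, and shows $\ker\tau=\ker B$ by proving that $\langle\cdot,\cdot\rangle$ is nondegenerate on $\mathrm{im}\,\tau$ (via the Reynolds operator / integration over the compact form); your dimension-count route silently depends on exactly this nondegeneracy, since $\dim(V_{2\ell}^{\otimes 2m})^{\mathrm{Sp}_{2\ell}}=\mathrm{rank}\,B$ is not a formal fact. Second, and this is the crux you defer, one needs the sign identity $(-1)^{o(M\cup N)}=\mathrm{sgn}(M)\,\mathrm{sgn}(N)\,(-1)^{c(M\cup N)}$ (equation~\eqref{eq:signed matchings}), which shows $B=(-1)^m\,D A D$ with $A_{M,N}=(-2\ell)^{c(M\cup N)}$ the Hanlon--Wales matrix and $D=\mathrm{diag}(\mathrm{sgn}(M))$. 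Only with this identity in hand does the Hanlon--Wales description of $\ker A$ (the $S_{2m}$-orbit of the \emph{unsigned} sum $\sum_\rho\pi\rho M_0$ together with flip differences $M-M'$) transfer, under the diagonal sign-conjugation, to the claimed description of $\ker B=\ker\tau$ with the signed element $v_0=\sum_\rho\mathrm{sgn}(\rho)\rho M_0$ and the \emph{negative}-flip relations $M+M'$.

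So the proposal is not a wrong approach — it is the paper's approach — but as written it asserts rather than proves the two keystones (nondegeneracy of the form on $\mathrm{im}\,\tau$, and the parity identity relating $o(M\cup N)$, $c(M\cup N)$, $\mathrm{sgn}(M)$, $\mathrm{sgn}(N)$). Without the latter in particular, you have no justification that the $\mathrm{sgn}(\rho)$-twist in $v_0$ and the ``negative flip'' relations are precisely what the diagonal sign change does to the Hanlon--Wales generators, so you cannot yet conclude that no further generators are needed. To complete the proof you should: (a) prove nondegeneracy on $\mathrm{im}\,\tau$, (b) compute $\langle\tau(M),\tau(N)\rangle$ explicitly and identify it with $B_{M,N}$ (being careful about heads vs.\ tails, cf.~\eqref{eq:head tail}), and (c) prove the parity identity and deduce the precise bijection between $\ker A$ and $\ker B$.
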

\begin{rem}
One can view this lemma as the tensor version of `the second fundamental theorem' for the symplectic group. 
We note that for undirected matchings the second fundamental theorem is well known. See for example \cite{LZ14, RW15} and the references in there. As far as we know, the current version for directed matchings has never appeared before.
\end{rem}
Before proving this lemma, let us first observe that it implies that the kernel of $\tau$ is mapped onto $\mathcal{I}_\ell\cap \C\D_D$. 
This implies that $\gamma\in \mathcal{I}_\ell$, showing that $\ker p_D\subseteq I$.
It is conversely straightforward using \eqref{eq:diagram} to see that $\mathcal{I}_\ell\cap \C\D_D\subseteq \ker p_D$.

So we are done by giving a proof of Lemma \ref{lem:ker tau}.
\begin{proof}[Proof of Lemma \ref{lem:ker tau}]
To prove the lemma we will use a result of Hanlon and Wales \cite{HW89}, which deals with matrices indexed by undirected matchings. We will start by relating the kernel of the map $\tau$ to the kernel of an associated matrix.

We define two matrices, $A$ and $B$, indexed by $\oM_{2m}$ as follows:
\[
A_{M,N}:=(-2\ell)^{c(M\cup N)}\quad \text{and}\quad B_{M,N}:=(2\ell)^{c(M\cup N)}(-1)^{o(M\cup N)+m},
\]
for $M,N\in \oM_{2m}$,  where $c(M\cup N)$ denotes the number of cycles in $M\cup N$.
Note that both $A$ and $B$ are $S_{2m}$-invariant.
We will now show:
\begin{equation}
\text{ the kernel of $\tau$ is equal to the kernel of $B$. }\label{eq:kernel}
\end{equation}
To prove \eqref{eq:kernel}, we first note that the bilinear form $\la\cdot,\cdot\ra$ restricted to the image of $\tau$ is nondegenerate.
Indeed, let $v\in (V_{2 \ell}^{\otimes 2m})^{\text{Sp}_{2 \ell}}$ be nonzero. 
Then we know that there exists $u\in V_{2 \ell}^{\otimes 2m}$ such that $\la v,u\ra\neq 0$.
Since for each $g\in \text{Sp}_{2 \ell}$ we have $\la v,gu\ra =\la g^{-1}v,u\ra =\la v,u\ra$, we see that, by integrating $\la v,gu\ra $ over a maximal compact subgroup of $\text{Sp}_{2 \ell}$ with respect to the Haar measure (that is, by applying the Reynolds operator), we may assume that $u$ is $\text{Sp}_{2 \ell}$-invariant. In other words, there exists $u\in (V_{2 \ell}^{\otimes 2m})^{\text{Sp}_{2 \ell}}$ such that $\la v,u \ra \neq0$, as desired.

Now fix $M,N\in \oM_{2m}$. 
We will show that $\la\tau(M),\tau(N)\ra=B_{M,N}$.
Fix for each cycle in $M\cup N$ an orientation. Flip all odd arcs in $M\cup N$ to obtain directed matchings $M'$ and $N'$ respectively.
By \eqref{eq:arc flip}, we have $\la\tau(M),\tau(N)\ra=(-1)^{o(M\cup N)}\la\tau(M'),\tau(N')\ra$.
Now
\begin{equation}
\la \tau(M'),\tau(N')\ra=\sum_{\substack{\phi,\psi:[2m]\to[2\ell]\\|\phi(e)|=1\text{ for all }e\in E(M')\\|\psi(e')|=1\text{ for all }e'\in E(N')}}\prod_{j=1}^{2m}\la a^{M'}_{\phi,j},a^{N'}_{\psi,j}\ra. \label{eq:symplectic matching}
\end{equation}
Since $M'\cup N'$ does not contain any odd arcs we see that 
\begin{equation}\label{eq:head tail}
\la a^{M'}_{\phi,j},a^{N'}_{\psi,j}\ra=\left \{\begin{array}{ll} \langle e_{\phi(j)},f_{\psi(j)} \rangle \text{ if } j \text{ is the tail of an arc of } M'
\\
\langle f_{\phi(j)},e_{\psi(j)} \rangle \text{ if } j \text{ is the head of an arc of } M'.
\end{array}\right.
\end{equation}
For each cycle of $C$ of $M'\cup N'$ we see that, to get a nonzero contribution, we need for each $j\in V(C)$ that $\psi(j)=\phi(j)$.
So each cycle $C$ gives a contribution of $(-1)^{|V(C)|/2}2\ell$. 
Hence $\la \tau(M'),\tau(N')\ra=(-1)^m (2\ell)^{c(M'\cup N')}$.
We conclude that $\la \tau(M),\tau(N)\ra=B_{M,N}$, as desired.
Now since the form $\la\cdot,\cdot\ra$ is nondegenerate on the image of $\tau$, we find that the kernel of $B$ is indeed equal to the kernel of $\tau$, proving \eqref{eq:kernel}.

It follows from Hanlon and Wales \cite[Theorem 3.1]{HW89} that the kernel of $A$ is spanned by the set
\begin{equation}
\{\sum_{\rho\in S_{2\ell+2}}\pi\rho M_0,\- M-M'\mid \pi\in S_{2m},  M\in \oM_{2m},\-M' \text{ a flip of }M\}.\label{eq:ker A}
\end{equation}
We will now show how to relate the kernel of $A$ to the kernel of $B$.

Define, for a matching $M$, $\text{sgn}(M)$ to be the sign of any permutation $\pi$ such that $\pi M=M_0$.
This is well defined since the stabilizer of $M_0$ in $S_{2m}$ consists of permutations of even sign.
First we show that for any $M,N\in \oM_{2m}$ we have:
\begin{equation}
(-1)^{o(M\cup N)}=\text{sgn}(M)\text{sgn}(N)(-1)^{c(M\cup N)}.\label{eq:signed matchings}
\end{equation}
To see this we may assume that the underlying  graph of $M$ is equal to the underlying graph of $M_0$ (by applying the same permutation to both $M$ and $N$). 
Choose an orientation of the cycles in $M\cup N$.
Multiplying both sides of \eqref{eq:signed matchings} by $(-1)^{o(M\cup N)}$, we may assume that $o(M\cup N)=0$.
Then, for each cycle $C$ in $M\cup N$, we need to apply a cycle $\pi_C\in S_{2m}$ to $N$ in order to map $N$ to $M$.
Since these cycles are of even length and pairwise disjoint, it follows that the product of their signs is equal to $(-1)^{c(M\cup N)}$. Hence $\text{sgn}(N)=(-1)^{c(M\cup N)}\text{sgn}(M)$, proving \eqref{eq:signed matchings}.

This implies for any $\mu:\oM_{2m}\to \C$:
\begin{equation}
\sum_{M\in \oM_{2m}}\mu_M M\in \ker A \iff \sum_{M\in \oM_{2m}}\text{sgn}(M)\mu_M M\in \ker B.\label{eq:ker A ker B}
\end{equation}
Indeed, define $\hat \mu:M\in \oM_{2m}\to \C$ by $\hat \mu(M)=\text{sgn}(M)\mu_M$ for $M\in\oM_{2m}$.
Then for any fixed matching $N$ we have:
\begin{eqnarray*}
(-1)^m(B\hat \mu )_N=\sum_{M\in \oM_{2m}}\text{sgn}(M)(-1)^m\mu_MB_{N,M}=\text{sgn}(N)\sum_{M\in \oM_{2m}}\mu_M A_{N,M}.
\end{eqnarray*}
Now combining \eqref{eq:ker A} with \eqref{eq:ker A ker B} we obtain the Lemma.
\end{proof}

\section{Proof of Theorem \ref{thm:main}}\label{sec:proof}
\subsection{The equivalence (i) $\Leftrightarrow$ (ii)}
We will derive the equivalence of (i) and (ii) from Theorem \ref{thm:char od}.
Let $\E\subset \G$ be the collection of Eulerian graphs, where we do not allow circles and let $\overrightarrow{\E}\subset \D$ be the collection of locally ordered directed graphs where the indegree is equal to the outdegree for each vertex.
Let $H=(V,E)\in \E$ and $\omega$ be an Eulerian orientation of $H$ and let $\kappa$ be a compatible local ordering. 
When considering $(H,\omega,\kappa)$ as a locally ordered directed graph, we will refer to it as $\iota_{\omega,\kappa}(H)$.
Let now $\omega'$ be another Eulerian orientation of $H$, with compatible local ordering $\kappa'$. 
Then, by the proof of Proposition \ref{prop:inv keuze}, we have that $(-1)^{c(H,\kappa)}\iota_{\omega,\kappa}(H)=(-1)^{c(H,\kappa')}\iota_{\omega',\kappa'}(H)\mod \I$.
This implies that we have a unique map 
\[
\iota: \E\to\C\D/\mathcal I \quad \text{ given by }\quad H\mapsto (-1)^{c(H,\kappa)}\iota_{\omega,\kappa}(H)
\]
for any choice of Eulerian orientation $\omega$ and compatible local ordering $\kappa$. 
It is clear that, extending $\iota$ linearly to $\C\E$, we obtain a linear bijection $\iota:\C\E\to \C\overrightarrow{\E}/(\mathcal I\cap\C\overrightarrow{\E})$.
So, the composition $p\circ \iota:\C\E\to R$ is a well-defined map.
Let $I_\ell\subset \C\E$ be the linear space spanned by
\begin{equation}
\{\sum_{\pi\in S_{2\ell+2}}H_{U,\pi}\mid H=(V,E)\in \E, U\subseteq E, |U|=\ell+1\}.
\end{equation}
We will now show that
\begin{equation}\label{eq:ker p na iota}
\ker (p\circ \iota)= I_\ell.
\end{equation}
Since any locally ordered directed graph $G\in\overrightarrow{\E}$ is congruent to $\iota_{\omega,\kappa}(H)$ or $-\iota_{\omega,\kappa}(H)$ modulo the ideal $\mathcal I$ for some $(H,\omega,\kappa)\in \G^\#$, and since $\iota:\C\E\to \C\overrightarrow{\E}/(\mathcal I\cap \C\overrightarrow{\E})$ is a bijection, in order to prove \eqref{eq:ker p na iota}, it suffices to show that, for any $G=(V,A,\kappa)\in \overrightarrow{\E}$ and $U\subseteq A$ of size $\ell+1$ and underlying graph $H=(V,E)\in \E$, we have
\begin{equation}
\sum_{\pi \in S_{2\ell+2}}\iota(H_{U,\pi})=\pm \sum_{\pi\in S_{2\ell+2}}\text{sgn}(\pi)G_{U,\pi}\mod \mathcal I.\label{eq:iota matching som}
\end{equation}
Since $\iota(H)=\pm G\mod \mathcal I$, we may assume that there exists an Eulerian orientation $\omega$ for $H$ and a compatible local ordering $\kappa$, such that $\iota_{\omega,\kappa}(H)=G$.
To compute $H_{U,\pi}$ for $\pi\in S_{2\ell+2}$ it is convenient to give the endpoints of the edges in $U$ the same labels as in $G$ (the sum $\sum_{\pi\in S_{2\ell+2}}H_{U,\pi}$ does not depend on the chosen direction of the edges of $U$). 
We will show 
\begin{equation}
\iota(H_{U,\pi})=\text{sgn}(\pi)(V,A_\pi,\kappa)\mod \mathcal I.	\label{eq:image iota}
\end{equation}
This clearly implies \eqref{eq:iota matching som}.
It suffices to show \eqref{eq:image iota} for the case that $\pi$ is equal to the transposition $(i,j)$ for $i,j\in [2\ell+2]$ (as transpositions generate the symmetric group).
The Eulerian orientation $\omega$ and compatible local ordering $\kappa$ in $H$ naturally induce a Eulerian orientation $\omega'$ and compatible local ordering $\kappa'$ in $H_{U,\pi}$.
Then it is easy to see that $(-1)^{c(H,\kappa)}=(-1)^{c(H_{U,\pi},\kappa')+1}$, as either two circuits in $H$ are transformed into a single circuit in $H_{U,\pi}$ or conversely a circuit in $H$ is transformed into two circuits in $H_{U,\pi}$; see Figure \ref{fig:tikz2}.
So this proves \eqref{eq:image iota} and we conclude that we have \eqref{eq:ker p na iota}.

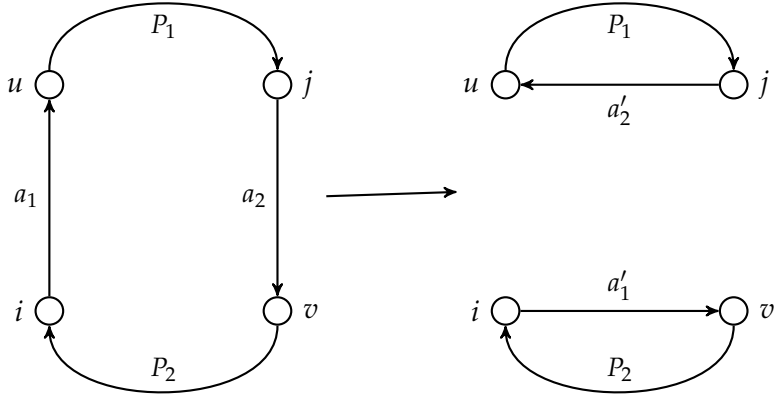
\begin{figure}[h!]
\centering
\begin{tikzpicture}[->,>=stealth',auto,node distance=3cm,
  thick,main node/.style={circle,draw,font=\sffamily\Large\bfseries}]

\node[main node] (1) [label = left : $i$]{};
\node[main node] (2) [right of=1] [label=right : $v$] {};
\node[main node] (3) [above of = 1] [label = left : $u$]{};
\node[main node] (4) [above of=2] [label=right : $j$] {};
\node[] (9) [above of =2] [yshift = -1.5cm]{};

  \path[every node/.style={font=\sffamily\small}]
    (1) edge node [left] {$a_1$} (3)
    (4) edge node[left] {$a_2$} (2)
    (2) edge[bend left=90] node [above] {$P_2$} (1)
    (3) edge[bend left=90] node [below] {$P_1$} (4);

\node[main node] (5) [right of= 2] [label = left : $i$]{};
\node[main node] (6) [right of=5] [label=right : $v$] {};
\node[main node] (7) [above of =5] [label = left : $u$]{};
\node[main node] (8) [above of=6] [label=right : $j$] {};
\node[] (10) [above of =5] [yshift = -1.5cm]{};

\draw[shorten >=0.5cm,shorten <=0.5cm,->] ([yshift=2cm] 9) -- ([yshift=2cm] 10);

  \path[every node/.style={font=\sffamily\small}]
    (5) edge node [above] {$a_1'$} (6)
    (8) edge node[below] {$a_2'$} (7)
    (6) edge[bend left=90] node [above] {$P_2$} (5)
    (7) edge[bend left=90] node [below] {$P_1$} (8);

\end{tikzpicture}
\caption{The transposition $(i,j)$} \label{fig:tikz2}
\end{figure}

Now the equivalence between (i) and (ii) follows directly from Theorem \ref{thm:char od}.
Indeed, let $f:\G\to\C$ be such that $f(G)=0$ if $G$ is not Eulerian, and extend $f$ linearly to $\C\G$. 
Then we obtain a unique linear map $\hat f:\C\D\to \C$ such that $\hat f(\iota(H))=f(H)$ for all $H\in \E$, $\hat f(H)=0$ if $H\in \D$ contains a vertex of odd (total) degree and such that $\hat f(\mathcal I)=0$.
If $f=s_h$ for some $h\in \bigwedge(V^*_{2 \ell})$, then by \eqref{eq:ker p na iota} we have that $f(I_\ell)=0$, proving that (i) implies (ii).
Conversely, if $f$ satisfies (ii), then by \eqref{eq:iota matching som} we have that $\hat f(\mathcal{I}_\ell)=0$. So there exists $h\in \bigwedge(V^*_{2 \ell})$ such that $\hat f$ is the skew-partition function of $h$. 
By definition we have that $f(G)=s_h(G)$ for $G\in \G$, finishing the proof.

\subsection{The implication (iii) $\Rightarrow$ (ii)}
Notice that, as the rank of $M_{f,0} =1$ and $f(\emptyset) = 1$, $f$ is multiplicative. 
Using the theory developed by Hanlon and Wales \cite{HW89}, we first show that $f(\bigcirc)$ has to be even.  

Let $N_{f,2k}$ be the submatrix of $M_{f,2k}$ indexed by the matchings on $2k$ vertices, i.e., the entries of $N_{f,2k}$ are of the form $f(\bigcirc)^n$ for some $n \in \N$. So $N_{f,2k}$ depends only on $f(\bigcirc)$. 
Schrijver \cite[Proposition 3]{S15} shows that if $\text{rk}(N_{f,2k}) \leq f(\bigcirc)^{2k}$ for all $k\in \N$, then $f(\bigcirc)\in \Z$.
Notice that if $f(\bigcirc) \in \N$, then $\text{rk}(N_{f,2k}) \leq f(\bigcirc)^{2k}$. 
In Section $3$ of \cite{HW89} it is explained that the eigenspaces of $N_{f,2k}$ are in one-to-one correspondence with partitions of $2k$ and that their dimension can be computed with the hook-length formula. If $f(\bigcirc) = n$, then we denote $N_{f,2k}$ by $N_{n,2k}$. 

\begin{lemma}\label{lem:rank growth}
For $n \in \N$, we have that $\text{rank}(N_{-n,2k}) \leq n^{2k}$ for all $k\in \N$ if and only if $n$ is even.
\end{lemma}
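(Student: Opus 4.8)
\textbf{Proof proposal for Lemma \ref{lem:rank growth}.}

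The plan is to compute the rank of $N_{-n,2k}$ explicitly via the eigenvalue/eigenspace description recalled from Hanlon and Wales \cite{HW89}, and then compare it to $n^{2k}$. First I would recall the structure: the matrix $N_{x,2k}$, whose entries are $x^{c(M\cup N)}$ for matchings $M,N$ on $2k$ points, is a self-adjoint operator on $\C\oM_{2k}^{\mathrm{undir}}$ that commutes with the $S_{2k}$-action, and by \cite[Section 3]{HW89} its eigenspaces are indexed by partitions $\lambda$ of $k$ (equivalently, by the even partitions $2\lambda$ of $2k$ that appear in the decomposition of the permutation module), with an explicit eigenvalue $f_\lambda(x)$ that is a product of linear factors of the form $(x + j)$ for integers $j$ in a range determined by the hook/content data of $\lambda$. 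The rank of $N_{x,2k}$ is then $\sum_{\lambda} \dim(V_\lambda)$, summed over those $\lambda\vdash k$ for which $f_\lambda(x)\neq 0$, where $\dim V_\lambda$ is computed by the hook-length formula. When $x=n\in\N$ all factors $(n+j)$ with $j\geq 0$ are nonzero, one gets full possible rank, and the known inequality $\mathrm{rank}(N_{n,2k})\le n^{2k}$ holds (this is just the $f(\bigcirc)\in\N$ case, where $N_{f,2k}$ is a genuine Gram-type matrix of a $2k$-fold tensor contraction in dimension $n$).

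Next I would treat $x=-n$. The point is that the linear factors of $f_\lambda(x)$ are of the form $(x+j)$ with $j$ running over a set of \emph{nonnegative} integers (the contents, or $2\cdot(\text{content})$, of the cells of $\lambda$, shifted appropriately — I would pin down the exact index set from \cite{HW89}), so $f_\lambda(-n)=0$ precisely when one of those integers equals $n$. Crucially, when $n$ is \emph{even}, the set of partitions $\lambda\vdash k$ with $f_\lambda(-n)\neq 0$ is exactly the set of $\lambda$ that "fit" inside the relevant bounded region, and the sum of their dimensions can be identified — via a dimension-counting identity, essentially the decomposition of $(\C^{n})^{\otimes 2k}$ under $\mathrm{Sp}_n\times S_{2k}$ restricted to the matching module, or directly by the same combinatorial identity that gives $\mathrm{rank}(N_{n,2k})\le n^{2k}$ in the positive case — with a quantity bounded by $n^{2k}$. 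So for $n$ even one obtains $\mathrm{rank}(N_{-n,2k})=\mathrm{rank}(N_{n,2k})\le n^{2k}$ for all $k$; here I expect the cleanest argument is to show $f_\lambda(-n)=0\iff f_\lambda(n)=0$ when $n$ is even (pairing up the factor $(n+j)$ at $-n$ against a factor $(n-j')$ at $+n$, using that the content set of $\lambda$ is symmetric in a way that works out only in the even case), which makes the two ranks literally equal.

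For the converse, suppose $n$ is odd; I would exhibit a single $k$ for which $\mathrm{rank}(N_{-n,2k}) > n^{2k}$. The natural candidate is to take $k$ just large enough that some partition $\lambda\vdash k$ has $f_\lambda(-n)\neq 0$ even though the "capacity" $n^{2k}$ has been exceeded — concretely, because when $n$ is odd the factor $(x+n)$ never coincides with the $x=-n$ evaluation in the same way (the relevant content integers that would force a zero all have the wrong parity), so strictly more eigenspaces survive at $-n$ than the bound $n^{2k}$ can accommodate. I would make this quantitative by picking, say, $k$ around $n$ or $n+1$ and checking that $\sum_{\lambda:\, f_\lambda(-n)\neq 0}\dim V_\lambda > n^{2k}$ using the hook-length formula for the few relevant $\lambda$ (e.g. the one-row and near-rectangular partitions), which is a finite, explicit computation. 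The main obstacle is bookkeeping: getting the index set of the linear factors of $f_\lambda(x)$ exactly right from \cite{HW89} and correctly tracking the parity condition that distinguishes even from odd $n$ — once that is nailed down, both directions are short, with the "only if" direction reducing to one explicit small example.
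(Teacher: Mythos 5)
Your proposal contains a genuine gap in the even-$n$ (``if'') direction. You propose to show $f_\lambda(-n)=0\iff f_\lambda(n)=0$ for $n$ even, concluding $\mathrm{rank}(N_{-n,2k})=\mathrm{rank}(N_{n,2k})$. This equality is false. Already for $n=2$ and $2k=4$, the matrix $N_{x,4}$ is the $3\times3$ matrix with $x^2$ on the diagonal and $x$ off the diagonal; at $x=2$ it has rank $3$, while at $x=-2$ it has rank $2$ (the Catalan number $C_2$). More conceptually, $\mathrm{rank}(N_{n,2k})$ ($n\in\N$) is the dimension of the $\mathrm{O}_n$-invariants in $(\C^n)^{\otimes 2k}$, while $\mathrm{rank}(N_{-n,2k})$ for even $n$ is the dimension of the $\mathrm{Sp}_n$-invariants; these disagree whenever $n<2k$. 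The other idea you gesture at (identify the rank with $\dim\bigl((\C^n)^{\otimes 2k}\bigr)^{\mathrm{Sp}_n}\le n^{2k}$) would work, but it requires knowing that $N_{-n,2k}$ is a \emph{nondegenerate} Gram matrix of the symplectic matching vectors, which is itself something to prove and is not how the paper establishes this lemma.

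The paper's argument bypasses any $\pm n$ comparison. It first computes $\mathrm{rank}(N_{-2,2k})$ directly: by Hanlon--Wales only $\lambda=(1^k)$ (i.e.\ $2\lambda=(2,\ldots,2)$) has nonzero eigenvalue at $x=-2$, so the rank is the Catalan number $\frac{(2k)!}{k!(k+1)!}\le 2^{2k}$. Then, for $n=2m$, it uses that $N_{-2m,2k}$ is the entry-wise (Schur) product of $N_{-2,2k}$ and $N_{m,2k}$, so $\mathrm{rank}(N_{-2m,2k})\le 2^{2k}\,m^{2k}=(2m)^{2k}$. The Schur-product step is the key device you are missing. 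For the odd-$n$ (``only if'') direction your intuition that too many eigenspaces survive is right, but your plan of checking a single small $k$ near $n$ is not justified: the paper instead exhibits the growing family $\lambda_d=(2m+d,2m,\ldots,2m)$ for $n=2m-1$, whose eigenvalue at $-n$ is nonzero and whose hook-length dimension grows like $(n+2)^{2k}$ by Stirling, so the bound $n^{2k}$ fails only for $k$ sufficiently large. You would need an analogous asymptotic estimate rather than a fixed small-$k$ computation.
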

\begin{proof}
If $n= -2$, then the partition $(2,...,2)$ of $2k$ is the only partition that corresponds to an eigenspace with non-zero eigenvalue of $N_{-2,2k}$ cf. \cite[Theorem 3.1]{HW89}. By the hook-length formula, cf. \cite{Sag}, we have that 
\[
\text{rank}(N_{-2,2k}) = \frac{(2k)!}{(k+1)!k!} \leq 2^{2k}. 
\]
The matrix $N_{-2n,2k}$ is the Schur-product of $N_{-2,2k}$ and $N_{n,2k}$, so $\text{rk}(N_{-2n,2k}) \leq (2n)^{2k}$.

If $n = 2m-1$ is odd, then for even $d$ the partitions $\lambda_d = (2m+d,2m,...,2m)$ of $2md+d$ give eigenspaces with non-zero eigenvalue of $N_{n,2md+d}$, cf. \cite[Theorem 3,1]{HW89}. Denote the dimension of the corresponding eigenspace with $f_d$. Following ~\cite{S15} we have, with the hook-length formula, that
\[
f_d = \frac{(2md+d)!}{(d!)^{2m+1}p(d)}, 
\]
where $p(d)$ is a polynomial in $d$. So by Stirling's formula
\[
(f_d)^{1/(2md+d)} \rightarrow 2m+1, \text{ as }d\to \infty,
\]
showing that the rank of $N_{-n,k}$ is larger than $(n+1)^{2k}$ for $k$ large enough.
This finishes the proof.
\end{proof}
Lemma \ref{lem:rank growth} implies that $f(\bigcirc) = -2\ell$ for some $\ell\in \N$. 
To prove the remaining part of the implication we use some framework developed by Schrijver \cite{S15}, which we will now describe.
Let $F_1F_2$ be the $2k$-fragment obtained from the disjoint union of $F_1$ and $F_2$, by deleting the vertex labeled $k+i$ of $F_1$ and the vertex labeled $i$ of $F_2$ and gluing the two edges incident to these vertices together for $i=1,\ldots,k$. We can extend this bilinearly to obtain an associative multiplication on $\C\F_{2k}$.
%
This makes $\C\F_{2k}$ into an associative algebra. The unit $\bold{1}_k$ in  $\C\F_{2k}$ is given by $k$ disjoint labeled edges such that the open ends are labeled $i$ and $i+k$. 

Let $\mathcal{I}_{2k}:=\{\gamma\in \C\F_{2k}\mid f(\gamma *F)=0 \text{ for all }F\in \F_{2k}\}$. Then $\mathcal{I}_{2k}$ is an ideal and $\mathcal{A}_{k}:=\C\F_{2k}/\mathcal{I}_{2k}$ is an associative algebra. Define $\tau : \mathcal{A}_k \rightarrow \C$ for $x\in \mathcal{A}_k$ by 
\[
\tau(x) = f(x * \bold{1}_k). 
\]
Using the function $\tau$, Schrijver \cite{S15} showed that the algebra $\mathcal{A}_k$ is semisimple. Moreover, Proposition 6 in \cite{S15} implies,  
\begin{equation}
\text{ if $x$ is a non-zero idempotent in $\mathcal{A}_k$, then $\left \{\begin{array}{ll}\tau(x) \text{ is a negative integer if $k$ is odd,}\\ \tau(x)\text{ is a positive integer if $k$ is even.}\end{array}\right.$}\label{eq:idp}
\end{equation}
In fact, the case that $k$ is even in \eqref{eq:idp} is exactly Proposition 6 in \cite{S15}, as $k$ is even implies that $f(\bigcirc)^k>0$. The case that $k$ is odd can be derived in exactly the same way. 

Let $k,m \in \N$, then, following \cite{S15}, for $\pi \in S_m$, let $P_{k,\pi}$ be the $2km$-fragment consisting of $km$ disjoint edges $e_{i,j}$ for $i = 1,...,m$ and $j= 1,...,k$, where $e_{i,j}$ connects the vertices labeled $j+(i-1)k$ and $km+j +(\pi(i)-1)k$. We define $q_{k,m}$ to be  
\[
q_{k,m} := \sum_{\pi \in S_m} P_{k,\pi}.
\]
Let $o(\pi)$ be the number of orbits of the permutation $\pi$. If $m> (2 \ell)^k$ and $k$ is odd, we have that
\begin{align}
\tau(q_{k,m}) &= \sum_{\pi \in S_m} ((-2 \ell)^k)^{o(\pi)} = (-1)^{m}\sum_{\pi \in S_m} (-1)^{m-o(\pi)}(2 \ell)^{ko(\pi)}	\nonumber
\\
&= (-1)^{m} \sum_{\pi \in S_m} \text{sgn}(\pi)((2 \ell)^k)^{o(\pi)} = 0, \label{eq:idp is 0}
\end{align}
since $\sum_{\pi\in S_m}\text{sgn}(\pi)x^{o(\pi)}=x(x+1)\cdots(x-m+1)$. 
Using this, we first show that $f$ satisfies
\begin{equation}
\sum_{\pi\in S_{2\ell+2}}f(G_{U,\pi})=0 \text{ for each graph } G=(V,E)\in \E \text{ and } U\subseteq E \text{ of size } \ell+1.\label{eq: (ii)}
\end{equation}
Let $m = 2\ell+2$ and consider $q_{1,m}$. Then, by \eqref{eq:idp is 0}, we have that $\tau(q_{1,m})=0$. 
Since $\frac{1}{m!} \ q_{1,m}$ is an idempotent, \eqref{eq:idp} implies that $q_{1,m}=0$ in $\mathcal{A}_{\ell+1}$. In other words, $q_{1,m}\in\mathcal{I}_{m}$. 
Consider a graph $G = (V,E)$ and let $U \subseteq E$ of size $\ell +1$. 
Let $F \in \mathcal{F}_{2k}$ be obtained from $G$ as follows: replace each edge $e=\{u,v\}$ from $U$ by two open ends such that one is connected to $u$ and the other to $v$.
Then, as $q_{1,m} \in \mathcal{I}_{m}$,  
\[
0= f(F * q_{1,m}) = \sum_{\pi \in S_m} f(G_{U,\pi}),
\]
proving \eqref{eq: (ii)}.

To show that $f(G)=0$ if $G$ is non-Eulerian, fix a vertex $v$ of $G$ that has odd degree, say $k$.
Define fragments $F_0,F_1\in \F_k$ as follows: $F_0$ consists of a single vertex with $k$ open ends incident with this vertex; $F_1$ is obtained from $G$ by removing $v$ from $G$ and replacing each edge incident with $v$ with an open end. Then $F_0*F_1=G$.
Now take $m$ such that $m> (2 \ell)^k$. 
Then $\frac{1}{m!}q_{k,m}$ is an idempotent and by \eqref{eq:idp is 0}, $\tau(q_{k,m}) = 0$, and so, by \eqref{eq:idp}, $q_{k,m}$ is actually $0$ in $\mathcal{A}_{km}$. 
Now, take $m$ copies of both $F_0$ and $F_1$ and create a fragment $F\in \F_{km}$ from their disjoint union as follows: the end labeled $j$ in $F_i$ gets label $ikm + j+k(n-1)$ in the $n$-th copy of $F_i$. Then, as $q_{k,m}\in \I_{mk}$, 
\[
0 = f(F * q_{k,m}) = m!(f(F_0 * F_1)^m)=m!(f(G)^m). 
\]
So $f(G) = 0$, finishing the proof.

\subsection{The implication (i) $\Rightarrow$ (iii)}
Fix $k\in \N$.
For an Eulerian $2k$-fragment (i.e., all unlabeled vertices have even degree), an \emph{Eulerian orientation} is defined to be an orientation of the edges such that, for each unlabeled vertex, the number of incoming arcs equals the number of outgoing arcs and such that there are exactly $k$ labeled vertices incident with an incoming arc (note that such an orientation always exists, which can for example be see by temporarily adding an edge between the vertices labeled $i$ and $k+i$ for $i=1,\ldots,k$).

Let $\mathcal{F}_{2k}^{\#}$ be the set of triples $(F,\omega,\kappa)$, where $F$ is an Eulerian $2k$-fragment, none of whose components are circles, with $\omega$ an Eulerian orientation and $\kappa$ a local ordering compatible with $\omega$ at all non-labeled vertices.

Notice that, similar to what we have seen earlier, the local ordering $\kappa$ decomposes $E(F)$ into circuits and $k$ directed walks that begin and end at a labeled vertex. Denote by $c(F,\kappa)$ the number of circuits in this partition and let $M=M(F,\kappa)$ be the directed perfect matching on $[2k]$, where $(i,j)\in E(M)$ if and only if there is a directed walk from $i$ to $j$ in this partition.

For $(F,\omega,\kappa)\in \F^\#_{2k})$, a map $\psi:[2k]\to [2\ell]$ and a map $\phi:E(F)\to [2\ell]$, we say that $\phi$ \emph{extends} $\psi$ if, for each $i\in [2k]$ and the unique open end $e\in E(F)$ incident with the vertex labeled $i$,  we have that $\phi(e)=\psi(i)$. By $o'(\phi)$ we denote the number of edges that are not open ends and that get mapped into $[\ell]$ by $\phi$.
Now define the tensor $t_h(F,\omega,\kappa) \in V_{2 \ell}^{\otimes 2k}$ as follows:
\begin{equation}
(-1)^{c(F,\kappa)}\sum_{\psi:[2k]\to [2 \ell]}\Big(\sum_{\substack{\phi:E(F)\to[2\ell]\\\phi \text{ extends } \psi}}(-1)^{o'(\phi)}\prod_{v\in V(F)}h(\phi_v)
\Big)\bigotimes_{(i,j)\in E(M)}e_{\phi(i)}\otimes f_{\phi(j)}.
\end{equation}

Let $(F_1,\omega_1,\kappa_1)$ and $(F_2,\omega_2,\kappa_2)$ be elements of $\mathcal{F}_{2k}^{\#}$ and let $M_i=M(F_i,\kappa_i)$ for $i=1,2$. 
Fix an arbitrary orientation of the cycles in $M_1\cup M_2$ and consider the odd arcs of $M_1\cup M_2$. For $i=1,2$ and an odd arc $(a,b)\in E(M_i)$, change the orientation of $\omega_i$ by flipping the directions of the edges in the path from $a$ to $b$ in the partition of $E(F_i)$, and change the local ordering at each vertex of the path by interchanging the edges in the path incident with the vertex.
Call the resulting orientations and local orderings $\omega_i'$ and $\kappa_i'$, respectively, and note that $\kappa_i'$ is compatible with $\omega_i'$.
Now $\omega_1'$ and $\omega_2'$ induce an Eulerian orientation $\omega$ in $F_1*F_2$. Moreover, $\kappa_1'$ and $\kappa_2'$ induce a compatible local ordering $\kappa$ in $F_1*F_2$.
By definition we have 
\[
s_h(F_1*F_2)=(-1)^{k+c(M_1\cup M_2)}\langle t_h(F_1,\omega'_1,\kappa'_1),t_h(F_2,\omega'_2,\kappa'_2)\rangle,
\]
where the $(-1)^k$ factor is to compensate for the skew-symmetry of the form, cf. \eqref{eq:head tail}, and where the $(-1)^{c(M_2\cup M_2)}$ factor comes from the circuits formed by gluing $F_1$ and $F_2$.
By the same argument, as in the proof of Lemma \ref{lem:inv cycle}, we have that $t_h(F_i,\omega_i,\kappa_i)=(-1)^{o(M_i)}t_h(F_i,\omega_i',\kappa_i')$ for $i=1,2$, where $o(M_i)$ denotes the number of odd arcs of $M_i$ in $M_1\cup M_2$.
As, by \eqref{eq:signed matchings}, $(-1)^{c(M_1\cup M_2)}=(-1)^{o(M_1\cup M_2)}\text{sgn}(M_1)\text{sgn}(M_2)$, we conclude that 
\begin{equation}\label{eq:gramm}
s_h(F_1*F_2)=\text{sgn}(M_1)\text{sgn}(M_2)(-1)^k\langle t_h(F_1,\omega_1,\kappa_1),t_h(F_2,\omega_2,\kappa_2)\rangle.
\end{equation}
Now \eqref{eq:gramm} implies that we can write $M_{s_h,2k}$ as the Gramm matrix (with respect to $\langle \cdot,\cdot,\rangle$) of vectors in $V_{2 \ell}^{\otimes 2k}$, implying that its rank is bounded by $(2\ell)^{2k}$.

\section{Concluding remarks and questions}\label{sec:remarks}
{\bf Future work.}
Having introduced and characterized skew-partition functions for graphs, the story does not end here.
For $m\in \mathbb{Z}$, consider the graph parameter $f_m$ defined by 
\begin{equation}
f_m(G):=\left\{\begin{array}{l} m^{c(G)}\text{ if $G$ is $2$-regular,}
\\0 \text{ otherwise,}\end{array}\right.
\end{equation}
where $c(G)$ is the number of components of $G$. Then, if $m\in \N$, we have that $f_m$ is the (ordinary) partition function of an edge-coloring model; if $m\in -2\N$, then $f_m$ is the skew-partition function of a skew-symmetric tensor.
This leaves open the case where $m$ is negative and odd. 
Let us look at the special case that $m=-1$.
Set $f_{-1}(\bigcirc)=-1$. 
Then it can be shown, using for example the results of Halon and Wales \cite{HW89}, that $\text{rk}(M_{f_{-1},k})\leq 3^k$ for each $k$. It can also be shown that $\text{rk}(M_{f_{-1},k})>2^k$ for $k$ large enough, cf. Lemma \ref{lem:rank growth}.
This implies that $f_{-1}$ is neither a partition function nor a skew-partition function.
For a connected graph $G$ we can however realize $f_{-1}(G)$ as the sum $f_{-2}(G)+f_{1}(G)$, a sum of a skew-partition function and a partition function, and extend this multiplicatively to disjoint unions.
The associated models live in $(\C^*)^{\otimes 2}$ and $((\C^2)^*)^{\otimes 2}$, respectively.
Taking their direct sum we could think of it as living in $((\C\oplus \C^2)^*)^{\otimes 2}$.
By equipping $\C$ with a nondegenerate bilinear form and $\C^2$ with a nondegenerate skew-symmetric form, we can interpret $\C$ as the even component and $\C^2$ as the odd component of the super vector space $\C\oplus \C^2$. It is natural to try to define (and characterize) partition functions for models living in a super vector space.
In particular we believe that evaluations of the circuit partition polynomial at odd negative integers can be realized in this way. This will be the object of future study.
\\
\quad\\
{\bf Computational complexity.}
For partition functions of edge-coloring models with $2$ colors (seen as Holant problems) there is a surprising dichotomy result \cite{CGW13,W15}, saying that it is \#P hard to evaluate these unless the model satisfies some particular conditions, in which case the partition function can be evaluated in polynomial time.
Are similar results true for the skew-partition functions introduced in the present paper?
As far as we know, even the computational complexity of evaluating the cycle partition polynomial at negative even integers $2\ell$ with $\ell<1$ is still open. 
For the directed cycle partition polynomial, this is \#P hard even for regular planar graphs of degree $4$, except at $x=0$ and $x=-2$, as follows from a reduction to the Tutte polynomial. See \cite{EM,V05} for details.

\section*{Acknowledgements}
The research leading to these results has received funding from the European Research Council
under the European Union's Seventh Framework Programme (FP7/2007-2013) / ERC grant agreement
n$\mbox{}^{\circ}$ 339109.

We thank Lex Schrijver for useful comments on a earlier version of this paper. We are moreover grateful to the anonymous referees for their careful reading of the paper and their helpful comments.

\end{document}